\definecolor{darkgreen}{rgb}{0.0, 0.5, 0.11}
\pgfplotsset{width=10cm,compat=1.9}
\font\got=eufm10 at 11pt
\font\gotm=eufm10 at 8pt
\font\posebni=msam10
\def\Re{\operatorname{Re}}
\def\Im{\operatorname{Im}}
\newcommand{\ca}[0]{\mathbbm{1}}
\newcommand{\C}[0]{{\mathbb C}}
\newcommand{\R}[0]{\mathbb{R}}
\newcommand{\Z}[0]{{\mathbb Z}}
\newcommand{\leqsim}[0]{\,\text{\posebni \char46}\,}
\newcommand{\cA}[0]{{\mathcal A}}
\newcommand{\cM}[0]{{\mathcal M}}
\newcommand{\cV}[0]{{\mathcal V}}
\newcommand{\oA}[0]{{\mathscr A}}
\newcommand{\oE}[0]{{\mathscr E}}
\newcommand{\oL}[0]{{\mathscr L}}
\newcommand{\oM}[0]{{\mathscr M}}
\newcommand{\oV}[0]{{\mathscr V}}
\newcommand{\oH}[0]{{\mathscr H}}
\newcommand{\gota}[0]{{\text{\got a}}}
\newcommand{\gotb}[0]{{\text{\got b}}}
\newcommand{\gotma}[0]{{\text{\gotm a}}}
\newcommand{\gotM}[0]{{\text{\got M}}}
\newcommand\bS{\mathbf{S}}
\newcommand{\mn}[2]{\{ #1 : #2 \}}
\newcommand{\sk}[2]{\left\langle #1 , #2\right\rangle}
\renewcommand{\div}[0]{{\rm div}\,}
\def\Dom{\operatorname{D}}
\def\Ran{\operatorname{R}}
\newtheorem{theorem}{Theorem}
\newtheorem{lemma}[theorem]{Lemma}
\newtheorem{proposition}[theorem]{Proposition}
\newtheorem{corollary}[theorem]{Corollary}
\renewcommand\leq[0]{\leqslant}
\renewcommand\geq[0]{\geqslant}
\renewcommand\epsilon[0]{\varepsilon}
\newcommand\wrt{\,\text{\rm d}}
\renewcommand\mod[1]{\left\vert{#1}\right\vert}
\newcommand\norm[2]{{\left\Vert{#1}\right\Vert_{#2}}}
\newtheorem{preremark}[theorem]{Remark}  \newenvironment{remark}%
{\begin{preremark}\rm}{\end{preremark}}
\begin{document}
\title[Maximal operator]{On semigroup maximal operators associated with divergence-form operators with complex coefficients}

\author[A. Carbonaro]{Andrea Carbonaro}
\address{Andrea Carbonaro, Universit\`{a} degli Studi di Genova, Dipartimento di Matematica, Via Dodecaneso 35, 16146 Genova, Italy}
\email{carbonaro@dima.unige.it}

\author[O. Dragi\v{c}evi\'c]{Oliver Dragi\v{c}evi\'c}
\address{Oliver Dragi\v{c}evi\'c, Department of Mathematics, Faculty of Mathematics and Physics, University of Ljubljana,  Jadranska 19, SI-1000 Ljubljana, Slovenia, and Institute of Mathematics, Physics and Mechanics, Jadranska 19, SI-1000 Ljubljana, Slovenia}
\email{oliver.dragicevic@fmf.uni-lj.si}

\subjclass[2020]{42B25, 47D06, 47A60, 35J15} 

\date{November 22, 2022}

\maketitle

\begin{abstract} 
Let $L_{A}=-\div(A\nabla)$ be an elliptic divergence form operator with bounded complex coefficients subject to mixed boundary conditions on an arbitrary open set $\Omega\subseteq\R^{d}$. We prove that the maximal operator $\oM^{A} f=\sup_{t>0}\mod{\exp(-tL_{A})f}$ is bounded in $L^{p}(\Omega)$ whenever $A$ is $p$-elliptic in the sense of \cite{CD-DivForm}. The relevance of this result is that, in general, the semigroup generated by $-L_{A}$ is neither contractive in $L^{\infty}$ nor positive, therefore neither the Hopf--Dunford--Schwartz maximal ergodic theorem \cite[Chap.~VIII]{DunfordSchwartz1} nor Akcoglu's maximal ergodic theorem \cite{Akc75} can be used. We also show that if $d\geq 3$ and the domain of the sesquilinear form associated with $L_{A}$ embeds into $L^{2^{*}}(\Omega)$ with  $2^{*}=2d/(d-2)$, then the range of $L^{p}$-boundedness of $\oM^{A}$ improves to $(rd/((r-1)d+2),rd/(d-2))$, where $r\geq 2$ is such that $A$ is $r$-elliptic. With our method we are also able to study the boundedness of the two-parameter maximal operator $\sup_{s,t>0}\mod{T^{A_{1}}_{s}T^{A_{2}}_{t}f}$.
\end{abstract}

\section{Principal result of the paper}\label{s: M princ}
Let $\Omega\subseteq\R^d$ be an arbitrary open set.
Denote by $\cA(\Omega)$ the family of all complex {\it uniformly strictly accretive} (also called {\it elliptic}) $d\times d$ matrix functions on $\Omega$ with $L^{\infty}$ coefficients.
That is, $\cA(\Omega)$ is the set of all measurable $A:\Omega\rightarrow\C^{d\times d}$ for which
there exist $\lambda,\Lambda>0$ such that for almost all $x\in\Omega$ we have
\begin{eqnarray}
\label{eq: ellipticity}
\Re\sk{A(x)\xi}{\xi}
&\hskip -19pt\geq \lambda|\xi|^2\,,
&\hskip 20pt\forall\xi\in\C^{d};
\\
\label{eq: boundedness}
\mod{\sk{A(x)\xi}{\eta}}
&\hskip-6pt\leq \Lambda \mod{\xi}\mod{\eta}\,,
&\hskip 20pt\forall\xi,\eta\in\C^{d}.
\end{eqnarray}
For any $A\in\cA(\Omega)$ denote by
$\lambda(A)$
the largest admissible $\lambda$ in \eqref{eq: ellipticity} and by
$\Lambda(A)$
the smallest $\Lambda$ in \eqref{eq: boundedness}.

Given $A\in\cA(\Omega)$ and $p\in (1,\infty)$, we say that $A$ is {\it $p$-elliptic}  \cite{CD-DivForm} if
$\Delta_{p}(A)>0$, where
\begin{equation}
\label{eq: kabuto}
\Delta_{p}(A):=
\underset{x\in\Omega}{{\rm ess}\inf}
\min_{\substack{\xi\in\C^{d}\\ |\xi|=1}}
\Re\sk{A(x)\xi}{\xi+|1-2/p|\bar\xi}_{\C^{d}}.
\end{equation}

It follows straight from \eqref{eq: kabuto}
that $\Delta_2(A)=\lambda(A)$, and that $\Delta_{p}$ is invariant under conjugation of $p$, meaning that $\Delta_{p}(A)=\Delta_{p^{\prime}}(A)$, where $1/p+1/p^{\prime}=1$. Denote by $\cA_{p}(\Omega)$ the class of all $p$-elliptic matrix functions on $\Omega$. Then $\cA_{p}(\Omega)=\cA_{p^{\prime}}(\Omega)$. It is known \cite{CD-DivForm} that $A\in\cA_{p}(\Omega)$ if and only $A^{*}\in \cA_{p}(\Omega)$, where $A^{*}$ denotes the conjugate transpose of $A$. Moreover, $\mn{\cA_{p}(\Omega)}{p\in[2,\infty)}$ is a decreasing chain of matrix classes such that
$$
\begin{array}{rcc}
\{\text{elliptic matrices on }\Omega\} & = & \cA_2(\Omega)\,, \\
\{\text{real elliptic matrices on }\Omega\} & = &
{\displaystyle\bigcap_{p\in[2,\infty)}\cA_{p}(\Omega)}\,.
\end{array}
$$
We will refer to $\lambda(A)$, $\Lambda(A)$ and $\Delta_{p}(A)$ collectively as
the $p$-{\rm ellipticity constants} of $A$.

The reader interested in the genesis of $p$-ellipticity and its properties should consult \cite{CD-DivForm, CD-MIXED, Trilinear}. Here we mention that M. Dindo\v{s} and J. Pipher \cite{Dindos-Pipher}, while studying reverse H\"older inequalities for weak solutions of complex elliptic operators, discovered (independently of the authors of the present paper) a condition equivalent to $p$-ellipticity which can be considered a strengthening of \cite[(2.25)]{CiaMaz}. Since then the same authors have continued the study of $p$-ellipticity, applying it successfully in different contexts; see their recent papers \cite{Dindos-Pipher2, DiLiPi20}.

\medskip

Suppose that either
\begin{enumerate}[(a)]
\item
$\oV=W^{1,2}_{0}(\Omega)$,
\item
$\oV=W^{1,2}(\Omega)$, or
\item
$\oV$ is the closure in $W^{1,2}(\Omega)$ of the set of restrictions $\mn{f|_\Omega}{f\in C_c^\infty(\R^d\backslash D)}$, where $D$ is a (possibly empty) closed subset of $\partial\Omega$.
\end{enumerate}
For every $A\in\cA(\Omega)$ we denote by $L_{A}$ the maximal accretive Kato-sectorial operator on $L^{2}(\Omega)$ associated with the densely defined, closed and sectorial form
\begin{equation*}\label{eq: M sesqui}
\gota(f,g):=\int_\Omega\sk{A\nabla f}{\nabla g}_{\C^d},\quad f,g\in\oV.
\end{equation*}
We denote by $(T^{A}_{t})_{t>0}$ the associated contractive analytic semigroup on $L^{2}(\Omega)$; see \cite[Chapter VI]{Kat}, \cite{AuTc} and \cite[Chapters~I~and~IV]{Ouhabook}. It was proven in \cite[Theorem~1.3]{CD-DivForm}, \cite[Theorem~2]{Egert2018} and \cite[Lemma~17]{CD-MIXED} that $(T^{A}_{t})_{t>0}$ extends to a contractive analytic semigroup on $L^{p}(\Omega)$ whenever $A\in\cA_{p}(\Omega)$.

The {\it maximal operator} associated with $(T^{A}_{t})_{t>0}$ is defined by the rule
$$
\oM^{A}f(x):=\sup_{t>0}\mod{T^{A}_{t}f(x)}.
$$
To the best of our knowledge, boundedness of $\oM^{A}$ for general complex elliptic $A$ is unknown. 

The following is our principal result.

\begin{theorem}\label{t: M princ} Let $p\in (1,\infty)$ and $A\in\cA_{p}(\Omega)$. There exists $C>0$, that depends only on the $p$-ellipticity constants of $A$, such that
\begin{equation}\label{eq: M princ}
\norm{\oM^{A}f}{p}\leq C\norm{f}{p},\quad \forall f\in L^{p}(\Omega).
\end{equation}
\end{theorem}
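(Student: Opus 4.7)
The plan is to combine the $L^p(\Omega)$-contractivity and analyticity of $(T^A_t)_{t>0}$, both established for $p$-elliptic $A$ in \cite[Theorem~1.3]{CD-DivForm} (see also \cite{Egert2018,CD-MIXED}), with the $L^p(\Omega)$-boundedness of the vertical square function
\[
g(f)(x) := \Bigl(\int_0^\infty |tL_A T^A_t f(x)|^2\,\tfrac{dt}{t}\Bigr)^{1/2},
\]
which is a consequence of the bounded $H^\infty$-functional calculus of $L_A$ on $L^p(\Omega)$ proven in the authors' prior work via the bilinear embedding technique.

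The core step is a Stein-type ``$g$-function'' inequality. For $f$ in a dense subspace of $L^p(\Omega)$ and each fixed $x\in\Omega$, setting $F(s):=T^A_s f(x)$, analyticity gives $F'(s)=-L_A T^A_s f(x)$, and the identity
\[
|F(t)|^2 = |f(x)|^2 - 2\Re \int_0^t \overline{F(s)}\,L_A T^A_s f(x)\,ds
\]
combined with Cauchy--Schwarz yields the pointwise bound
\[
\oM^A f(x)^2 \le |f(x)|^2 + 2\,h(f)(x)\,g(f)(x),
\]
where $h(f)(x) := \bigl(\int_0^\infty |T^A_s f(x)|^2\,ds/s\bigr)^{1/2}$. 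The obstacle is that $h(f)$ is divergent at $s=0$, since $T^A_s f\to f$. I would resolve this by replacing $T^A_s f$ in the Stein identity by the dyadic difference $T^A_s f - T^A_{2s} f = \int_s^{2s} L_A T^A_\tau f\,d\tau$, which vanishes as $s\to 0$ and whose square function $\bigl(\int_0^\infty |T^A_s f - T^A_{2s} f|^2\,ds/s\bigr)^{1/2}$ is pointwise dominated (via Cauchy--Schwarz and Fubini) by $g(f)$, hence $L^p$-bounded.

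The long-time behavior is handled through the mean-ergodic decomposition $f = Pf + (I-P)f$, where $Pf := \lim_{t\to\infty} T^A_t f$ is the projection onto $\ker L_A$, an $L^p$-contraction by \cite{CD-DivForm}. Since $\oM^A(Pf) = |Pf|$, the argument reduces to the range of $I-P$, on which $T^A_t f\to 0$ as $t\to\infty$ and the dyadic telescoping $T^A_t f - Pf = \sum_{k\ge 0}(T^A_{2^k t} f - T^A_{2^{k+1}t} f)$ converges; inserting the $h$-fix into Stein's argument then yields the pointwise estimate $\oM^A f \le |Pf| + C\,g(f)$, and hence $\|\oM^A f\|_p \le C\|f\|_p$.

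The main obstacle---and the reason neither the Hopf--Dunford--Schwartz nor Akcoglu's theorem suffices---is precisely the absence of positivity and $L^\infty$-contractivity, which forces the whole argument through $L^p$-only estimates obtained from $p$-ellipticity. The technical pivot is the dyadic/telescoping realization of Stein's inequality, converting the divergent auxiliary $h(f)$ into a difference-based square function controlled by the $H^\infty$-calculus-based $g(f)$.
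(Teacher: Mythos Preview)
Your argument has a genuine gap at the decisive step: the claimed pointwise inequality $\oM^{A}f\leq |Pf|+C\,g(f)$ does not follow from the telescoping identity. Writing $T^{A}_{t}f-Pf=\sum_{k\geq 0}\bigl(T^{A}_{2^{k}t}f-T^{A}_{2^{k+1}t}f\bigr)$ and applying Cauchy--Schwarz on each dyadic block gives
\[
\bigl|T^{A}_{t}f(x)-Pf(x)\bigr|\ \leq\ (\log 2)^{1/2}\sum_{k\geq 0}g_{k}(x),\qquad g_{k}(x)^{2}=\int_{2^{k}t}^{2^{k+1}t}\bigl|\tau L_{A}T^{A}_{\tau}f(x)\bigr|^{2}\,\frac{\wrt\tau}{\tau},
\]
and while $\bigl(\sum_{k}g_{k}^{2}\bigr)^{1/2}=g(f)$, the $\ell^{1}$-sum $\sum_{k}g_{k}$ is \emph{not} controlled by the $\ell^{2}$-sum. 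This $\ell^{1}/\ell^{2}$ obstruction is exactly why a square function alone cannot dominate a maximal function pointwise; even for the Euclidean heat semigroup the inequality $\sup_{t}|T_{t}f(x)|\leq |f(x)|+C\,g(f)(x)$ fails. Your ``$h$-fix'' replaces one divergent integral by a convergent one, but the divergence resurfaces as the non-summability of the dyadic pieces. In Stein's original argument the missing ingredient is precisely positivity (via $|T_{t}f|^{2}\leq T_{t}(|f|^{2})$ and the Hopf--Dunford--Schwartz maximal ergodic theorem), which you correctly identify as unavailable but do not replace.

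The paper's proof takes a completely different route. It does not try to bound $\oM^{A}$ by a square function. Instead it fixes a \emph{real} elliptic matrix $B$ (say $B=I$), for which $(T^{B}_{t})_{t>0}$ \emph{is} positive and $L^{\infty}$-contractive, so Hopf--Dunford--Schwartz combined with Cowling's subordination gives $\oM^{B}$ bounded on $L^{p}$. One then writes $\oM^{A}f\leq \oM^{B}f+\sup_{t>0}|T^{A}_{t}f-T^{B}_{t}f|$ and attacks the difference via Duhamel's formula $T^{A}_{t}f-T^{B}_{t}f=\int_{0}^{t}T^{B}_{s}(L_{B}-L_{A})T^{A}_{t-s}f\,\wrt s$. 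The key new idea is a \emph{fractional power transfer}: for a small $\alpha\in(0,1/2)$ one has, by Kato's theorem on domains of fractional powers, $\Dom_{2}(L^{\alpha}_{A})=\Dom_{2}(L^{\alpha}_{B})$, whence $U^{B,A}_{\alpha}:=L^{\alpha}_{B}L^{-\alpha}_{A}$ is bounded on $L^{2}$ and, by complex interpolation with the bounded imaginary powers from the $H^{\infty}$-calculus, also on $L^{p}$. This lets one rewrite $\int_{0}^{t}L_{B}T^{B}_{s}T^{A}_{t-s}f\,\wrt s$ as an average, against a probability measure on $[0,t]$, of $\psi_{1-\alpha}(sL_{B})\,U^{B,A}_{\alpha}\,\psi_{\alpha}((t-s)L_{A})f$; the resulting two-parameter maximal operator is then controlled by two applications of Cowling's Mellin subordination and the exponential bounds on $\|L^{iu}_{A}\|_{p}$, $\|L^{iu}_{B}\|_{p}$. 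The comparison with a real-coefficient semigroup is what restores the missing positivity input.
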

Unless the form $\gota$ is given by real coefficients, the semigroup $(T^{A}_{t})_{t>0}$ is neither bounded in $L^{\infty}(\Omega)$ nor positive \cite[Theorems 4.14, 4.2]{Ouhabook}. The novelty of Theorem~\ref{t: M princ} is that the relevant literature known to us deals with semigroups of sub-positive contraction whose generators have a bounded holomorphic functional calculus in $L^{p}$; see  \cite{cowling} and \cite[Chapter~III]{Stein70}. 

The known methods allow proving \eqref{eq: M princ} only in the case where $A$ is a complex rotation of a real matrix  (see Section~\ref{s: M complex rot}). Namely, \cite{cowling,Stein 70} exploit the Hopf--Dunford--Schwartz maximal ergodic theorem and its generalizations, that is to say, the boundedness of the {\it maximal ergodic operator}
 (defined in \eqref{eq: MaxErgOp} for the case of divergence-form operators)
associated with a positive contractive semigroup  \cite[Chap.~VIII]{DunfordSchwartz1}, \cite[Chap.~4]{CW}, \cite{Akc75} and \cite{Garsia65}; see also \cite[Section~10.7.d]{HNVW2} for a summary of the known results in this direction. However, in the case of complex matrices $A$ there is no general result that replaces the Hopf--Dunford--Schwartz maximal ergodic theorem. Instead, the boundedness of the maximal ergodic operator associated with $(T^{A}_{t})_{t>0}$ will be an immediate consequence of our Theorem~\ref{t: M princ}.

\subsection{Principal ideas for the proof of Theorem~\ref{t: M princ}}\label{s: M main ideas} 
It turns out that for real elliptic matrices $B$ the maximal operator $\oM^{B}$ is bounded in $L^{p}(\Omega)$ for all $p\in (1,\infty]$ (see Corollary~\ref{c: M maximal real}). 
\smallskip

The \framebox{first idea} for proving Theorem~\ref{t: M princ} comes from afar (for instance, see \cite{Stein76}): by triangular inequality,
\begin{equation}\label{eq: M split}
\oM^{A}f(x)\leq\oM^{B}f(x)+\sup_{t>0}\mod{T^{A}_{t}f(x)-T^{B}_{t}f(x)},
\end{equation}
therefore we reduce the problem to proving that the second maximal operator on the right-hand side is bounded in $L^{p}(\Omega)$. The point is that the maximal operator
$$
\oM^{A,B}f(x)=\sup_{t>0}\mod{T^{A}_{t}f(x)-T^{B}_{t}f(x)}
$$
is expected to behave much better than $\oM^{A}$. As an example, consider the case of the two semigroups $\exp(-tzL)$ and $\exp(-tL)$, where $\Re z>0$ and $L=-\Delta$ is the positive Euclidean Laplacian. The Fourier multiplier $m_{z}(\xi):=\exp(-z\mod{\xi}^{2})-\exp(-\mod{\xi}^{2})$ satisfies the estimates $\mod{m_{z}(\xi)}\leqsim \min(|\xi|,|\xi|^{-1})$ and $\mod{\xi\cdot\nabla m_{z}(\xi)}\leqsim 1$, from which one can prove \cite{Bou86,Carbery86,BMSW18} that the square function 
$$
f\mapsto\left(\sum_{n\in\Z}\mod{\exp(-2^{n}zL)f-\exp(-2^{n}L)f}^{2}\right)^{1/2}
$$
and, consequently, the maximal operator 
$$
f\mapsto \sup_{t>0}\mod{\exp(-tL)f-\exp(-tzL)f}
$$ 
are bounded in $L^{p}(\R^{d})$ for all $p\in (1,\infty)$.
\smallskip

The \framebox{second idea} used in the proof of Theorem~\ref{t: M princ} comes from \cite{cowling}, where the author developed a subordination method for studying the maximal operator $\sup_{t>0}\mod{m(t\oA)f}$ associated with, say, a generator $\oA$ of a holomorphic semigroup with bounded imaginary powers on $L^{p}$ and a multiplier $m\in L^{1}(\R_{+},\wrt\lambda/\lambda)$: at least formally we have 
\begin{equation}\label{eq: MM}
m(t\oA)f=\frac{1}{2\pi}\int_{\R}t^{iu}[\cM m](u)\oA^{iu}f\wrt u,
\end{equation}
where $\cM m$ denotes the Mellin transform of $m$, which is defined by rule
\begin{equation}\label{eq: MM1}
[\cM m](u)=\int^{\infty}_{0}m(\lambda)\lambda^{-iu}\frac{\wrt \lambda}{\lambda},\quad u\in\R.
\end{equation}
It follows from \eqref{eq: MM} that
\begin{equation}\label{eq: MM2}
\norm{\sup_{t>0}\mod{m(t\oA)f}}{p}\leq \frac{1}{2\pi}\int_{\R}\mod{[\cM m](u)}\norm{\oA^{iu}f}{p}\wrt u.
\end{equation}
This reduces the estimate of the $L^{p}$-norm of maximal function on the left-hand side of \eqref{eq: MM} to finding poinwise estimates of $\cM m$ and $L^{p}$-estimates of imaginary powers of $\oA$.

If we apply this method to the multiplier

\begin{equation}\label{eq: M multiplier}
m_{\pm\theta}(\lambda):=\exp(-e^{\pm i\theta}\lambda)-\exp(-\lambda),\quad \lambda>0,\quad 0<\theta<\pi/2,
\end{equation}
and the generator $L_{B}$ where $B\in\cA(\Omega)$ is real, then it follows from \cite[Theorem~3]{CD-MIXED} that $\oM^{e^{\pm i\theta} B,B}$ is bounded in $L^{p}(\Omega)$ whenever the matrix $A=e^{\pm i\theta}B$ is $p$-elliptic (see Section~\ref{s: M complex rot} for details). This argument breaks down when the complex rotations of $B$ are replaced by a general $A\in\cA_{p}(\Omega)$, because the two generators $L_{A}$ and $L_{B}$, in general, are not related via functional calculus (see also the comments in Section~\ref{s: M proof}). Let us explain how we get around this.
\medskip

Let $A,B\in\cA(\Omega)$, where $B$ is real. We have the Duhamel's formula:
\begin{equation}\label{eq: M Duhamel1}
T^{A}_{t}f-T^{B}_{t}f=\int^{t}_{0}T^{B}_{s}(L_{B}-L_{A})T^{A}_{t-s}f\wrt s,\quad \forall f\in (L^{2}\cap L^{p})(\Omega).
\end{equation}
Duhamel's formula is typically used for showing that the difference of the two semigroups is small (with respect to some topology) whenever $A-B$ is small, for example, with respect to the $L^{\infty}$-norm. So it would seem futile to use the estimate
\begin{equation}\label{eq: M Duhamel}
\mod{T^{A}_{t}f-T^{B}_{t}f}\leq \mod{\int^{t}_{0}L_{B}T^{B}_{s}T^{A}_{t-s}f\wrt s}+\mod{\int^{t}_{0}T^{B}_{s}L_{A}T^{A}_{t-s}f\wrt s},
\end{equation}
but actually it works. 

For simplicity consider the first term on the right-hand side \eqref{eq: M Duhamel}. Our idea is to transfer a fractional power of $L_{B}$ from the first semigroup to the second one as follows: 
$$
L_{B}T^{B}_{s}T^{A}_{t-s}=L^{1-\alpha}_{B}T^{B}_{s}L^{\alpha}_{B}T^{A}_{t-s},\quad \alpha\in (0,1).
$$
Now we would like to transform the fractional power of $L_{B}$ into a fractional power of $L_A$, more precisely we aim at the decomposition 
$$
L^{\alpha}_{B}=U^{B,A}_{\alpha}L^{\alpha}_{A},
$$
where $U^{B,A}_{\alpha}$ is a bounded operator on $L^{p}(\Omega)$.
Yet this is not possible for all $A, B, p,\alpha$, because for, say, $1/2\leq \alpha<1$ and $p=2$, the domains of $L^{\alpha}_{A}$ and $L^{\alpha}_{B}$ are either unrelated or unknown. This procedure with $B=I$, $\alpha=1/2$ and $p=2$ would require a solution to the Kato problem for the square root of $L_{A}$ which is still an open problem in the general setting we consider in this paper. 

However, by combining a classic result of Kato \cite{Kato61} with \cite[Theorem~3]{CD-MIXED} and a classic complex interpolation argument, we are able to make the procedure described above work for a small positive power $\alpha=\alpha(p)<1/2$ whenever $A\in\cA_{p}(\Omega)$ (see Proposition~\ref{p: M frac transfer}). This gives
$$
\int^{t}_{0}L_{B}T^{B}_{s}T^{A}_{t-s}f\wrt s=\int^{t}_{0}\psi_{1-\alpha}(sL_{B})U^{B,A}_{\alpha}\psi_{\alpha}((t-s)L_{A})f\wrt\mu^{t}_{\alpha}(s)
$$
with $U^{B,A}_{\alpha}$ as above,
\begin{equation}\label{eq: M mu}
\wrt\mu ^{t}_{\alpha}(s)=s^{\alpha-1}(t-s)^{-\alpha}\ca_{[0,t]}(s)\wrt s,\quad t>0,
\end{equation}
and
\begin{equation}\label{eq: M psi}
\psi_{\beta}(\lambda)=\lambda^{\beta}e^{-\lambda},\quad \lambda>0,\quad \beta>0.
\end{equation}
The advantage of this decomposition is that $\mu^{t}_{\alpha}(\R)$ is a finite measure on $\R$ with total mass independent of $t$. More precisely,
\begin{equation}\label{eq: M psi bis}
\mu^{t}_{\alpha}(\R)=B(\alpha,1-\alpha)=\frac{\pi}{\sin(\alpha\pi)},\quad \forall t>0.
\end{equation}
Then, by using Cowling's subordination \cite{cowling} twice in combination with our functional calculus result in \cite[Proposition~3]{CD-MIXED}, we obtain that the two-variable maximal operator
$$
\sup_{0<s\leq t}\mod{\psi_{1-\alpha}(sL_{B})U^{B,A}_{\alpha}\psi_{\alpha}((t-s)L_{A})f}
$$
is bounded in $L^{p}(\Omega)$, whenever $A\in\cA_{p}(\Omega)$ (see Section~\ref{s: M proof of THM}). In this way we can prove that $\oM^{A,B}$ --- and thus $\oM^{A}$ --- is bounded in $L^{p}(\Omega)$ whenever $A\in\cA_{p}(\Omega)$.

\section{More notation and preliminaries}\label{s: prelim}

Let $1<p<2$ and $A\in\cA_{p}(\Omega)$. With a slight abuse of notation, we maintain the symbol $L_{A}$ for denoting the negative generator of $(T^{A}_{t})_{t>0}$ in $L^{q}(\Omega)$, $p\leq q\leq p^{\prime}$. We have $L^{q}(\Omega)={\rm N}_{q}(L_{A})\oplus \overline{\Ran_{q}(L_{A})}$ and $Q_{q}f=\lim_{t\rightarrow+\infty}T^{A}_{t}f$ is a consistent family of contractive projections onto ${\rm N}_{q}(L_{A})$ for $p\leq q\leq p^{\prime}$ \cite{CDMY}. Hence, ${\rm N}_{q}(L_{A})=\{0\}$ if and only if ${\rm N}_{2}(L_{A})=\{0\}$. A simple calculation based on the definition of $L_{A}$ by means of the sesquilinear form $\gota$ shows that ${\rm N}_{2}(L_{A})={\rm N}_{2}(L_{I})$ consists of the vector space of all $L^2$ functions which are constant on each connected component of $\Omega$ of finite measure whose boundary does not intersect $D$ in a set of positive relative $2$-capacity \cite{AreWarm03}, and are $0$ on all other connected components of $\Omega$. In order to simplify our proofs, in this paper we shall always assume that  ${\rm N}_{2}(L_{I})=\{0\}$.

Under this assumption $L_{A}$ is an injective sectorial operator of angle $<\pi/2$ on $L^{q}(\Omega)$, whenever $p\leq q\leq p^{\prime}$. In this range of $q$'s the complex powers $L^{z}_{A}$ are well defined on $L^{q}(\Omega)$ for every $z\in\C$. Also, for every $z\in\C$, the closed densely defined operators $L^{z}_{A}$ on $L^{q}(\Omega)$, $p\leq q\leq p^{\prime}$, are consistent \cite{Haase, Yagi2010}. 
\begin{proposition}[\cite{CD-MIXED}]\label{p: M Mixed}
Let $p>1$ and $A\in\cA_{p}(\Omega)$. Then $L_{A}$ has bounded $H^{\infty}$-calculus of angle $<\pi/2$ on $L^{p}(\Omega)$ in the sense of \cite{CDMY}. It particular, $L_{A}$ has bounded imaginary powers on $L^{p}(\Omega)$ and there exist $\theta_{p}\in (0,\pi/2)$ and $C_p>0$ such that
\begin{equation}\label{eq: M impowers}
\norm{L^{iu}_{A}}{p}\leq C_pe^{\theta_{p}|u|},\quad \forall u\in\R.
\end{equation}
The angle $\theta_{p}$ and the constant $C_p$ only depend on $p$ and the $p$-ellipticity constants of $A$ \cite[Corollary~5.17 and Proposition~5.23]{CD-DivForm}.
\end{proposition}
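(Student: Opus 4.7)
The strategy is to derive bounded $H^{\infty}$-calculus on $L^{p}(\Omega)$ from two inputs: an $L^{2}$-calculus of angle strictly below $\pi/2$ (cheaply available via Kato--McIntosh) and a vertical square-function estimate on $L^{p}$ equivalent to the bilinear embedding already available for $p$-elliptic matrices in \cite{CD-DivForm}.

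On $L^{2}$ the sesquilinear form $\gota$ is sectorial with numerical range confined to the sector of half-angle $\arctan(\Lambda(A)/\lambda(A))<\pi/2$, so by Kato's form theory $L_{A}$ is $m$-accretive and sectorial on $L^{2}(\Omega)$ at this same angle; McIntosh's classical theorem then yields bounded $H^{\infty}$-functional calculus on every strictly wider sector, uniformly for $A^{*}$.

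To extend to $L^{p}(\Omega)$ when $A\in\cA_{p}(\Omega)$, I would invoke the bilinear embedding of \cite{CD-DivForm},
$$
\int_{0}^{\infty}\!\int_{\Omega}|\nabla T^{A}_{t}f|\,|\nabla T^{A^{*}}_{t}g|\wrt x\wrt t\leq C\norm{f}{p}\norm{g}{p'},
$$
with $C$ depending only on the $p$-ellipticity constants. Since $L_{A}=-\div(A\nabla\,\cdot\,)$, this is equivalent to $L^{p}(\Omega)$-boundedness of the vertical square function $f\mapsto(\int_{0}^{\infty}|tL_{A}T^{A}_{t}f|^{2}\wrt t/t)^{1/2}$, together with the dual estimate for $L_{A^{*}}$ on $L^{p'}(\Omega)$. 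Combined with $L^{p}$-sectoriality of $L_{A}$ at some angle $\omega_{p}<\pi/2$ controlled by the $p$-ellipticity constants---also proved in \cite{CD-DivForm}---the square-function criterion of \cite{CDMY} promotes this to bounded $H^{\infty}$-calculus on $L^{p}(\Omega)$ on every sector wider than $\omega_{p}$. Picking any $\theta_{p}$ with $\omega_{p}<\theta_{p}<\pi/2$ and feeding the holomorphic function $z\mapsto z^{iu}$, whose sup-norm on this sector equals $e^{\theta_{p}|u|}$, into the calculus yields \eqref{eq: M impowers}.

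The main obstacle is the quantitative sectorial angle $\omega_{p}<\pi/2$ on $L^{p}$, controlled only by $\lambda(A),\Lambda(A),\Delta_{p}(A)$. The contractivity of $T^{A}_{t}$ on $L^{p}$ that comes for free from $p$-ellipticity yields sectoriality with angle at most $\pi/2$ but no better; it is the Bellman-function heat-flow computations of \cite{CD-DivForm} that push this strictly below $\pi/2$, and this strict inequality is precisely what will be needed later to ensure convergence of the subordination integral \eqref{eq: MM} and hence the proof of Theorem~\ref{t: M princ} itself.
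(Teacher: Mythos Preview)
The paper does not prove this proposition; it is quoted from \cite{CD-MIXED} (building on \cite{CD-DivForm}), so there is no in-paper argument to compare against. Your overall architecture---$L^{2}$-calculus via McIntosh plus a quantitative $L^{p}$ input from the bilinear embedding of \cite{CD-DivForm}, yielding an $H^{\infty}$-calculus angle strictly below $\pi/2$ controlled by the $p$-ellipticity constants---is indeed that of the cited sources, and your final paragraph correctly identifies the decisive quantitative issue.

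There is, however, a genuine gap in the middle step. The bilinear embedding is \emph{not} equivalent to $L^{p}$-boundedness of the vertical square function $f\mapsto\bigl(\int_{0}^{\infty}|tL_{A}T^{A}_{t}f|^{2}\wrt t/t\bigr)^{1/2}$. The bilinear inequality controls $\int\!\!\int|\nabla T^{A}_{t}f|\,|\nabla T^{A^{*}}_{t}g|$, whereas the square function is built from $tL_{A}T^{A}_{t}f=-t\,\div(A\nabla T^{A}_{t}f)$; there is no pointwise comparison between $|\nabla u|$ and $|\div(A\nabla u)|$, so the square-function criterion of \cite{CDMY} cannot be fed directly from the bilinear embedding as you claim. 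What \cite{CD-DivForm} actually does is bypass the square-function characterisation: one represents $\sk{m(L_{A})f}{g}$ by a Calder\'on-type reproducing formula, integrates by parts so that only first-order gradients of $T^{A}_{t}f$ and $T^{A^{*}}_{t}g$ appear, and then bounds the resulting double integral directly by the bilinear embedding together with a Schur-type estimate in the $t$-variable. The imaginary-power bound \eqref{eq: M impowers} then follows exactly as in your last paragraph.
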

\subsection{Cowling's subordination technique}\label{s: M Cowling} It is by now a classic result in semigroup theory that for a semigroup $(S_{t})_{t>0}$ of linear transformations on some  Lebesgue spaces $L^{p}$, the boundedness of the associated maximal ergodic operator in $L^{p}$ together with a $H^{\infty}$-calculus of angle $<\pi/2$ in $L^{p}$ for the negative generator implies the boundedness of the associated maximal operator in $L^{p}$. One way to prove this result is to use Cowling's subordination technique \cite{cowling} (see also \cite{Meda,CD-mult}) which we briefly describe below in the specific case we are interested in here (see \eqref{eq: M comparison}, Proposition~\ref{p: M maximal domination} and Corollary~\ref{c: M maximal real}). Note that Cowling originally formulated his argument only for symmetric semigroups. Note also that as an alternative to Cowling's subordination method, we can slightly modify an argument of Stein \cite[Chapter~III]{Stein70} that exploits complex interpolation and fractional integration.
\medskip

Fix $A\in\cA(\Omega)$. Define the associated {\it maximal ergodic operator} by the rule 
\begin{equation}
\label{eq: MaxErgOp}
\oE^{A}f(x)=\sup_{t>0}\mod{\frac{1}{t}\int^{t}_{0}T^{A}_{s}f(x)\wrt s},\quad f\in L^{2}(\Omega).
\end{equation}
A rapid calculation shows that for all $f\in (L^{2}\cap L^{p})(\Omega)$ and $t>0$ we have
$$
T^{A}_{t}f=\frac{1}{t}\int^{t}_{0}T^{A}_{s}f\wrt s-\frac{1}{t}\int^{t}_{0}\psi_{1}(sL_{A})f\wrt s,
$$
where $\psi_{\alpha}$, $\alpha>0$, is defined by \eqref{eq: M psi} and the bounded operator $\psi_{1}(tL_{A})$ on $L^{2}(\Omega)$ is defined by means of McIntosh's $L^{2}$-functional calculus \cite{Mc}.
Therefore,
\begin{equation}\label{eq: M comparison}
\sup_{t>0}\mod{T^{A}_{t}f(x)}\leq \oE^{A}f(x)+\sup_{t>0}\mod{\psi_{1}(tL_{A})f(x)},\quad \forall f\in (L^{2}\cap L^{p})(\Omega).
\end{equation}

\begin{proposition}\label{p: M maximal domination}
Let $p>1$. Suppose that $A\in\cA_{p}(\Omega)$. For every $\alpha>0$ there exists $C>0$, which depends only on $\alpha$, $p$ and the $p$-ellipticity constants of $A$, such that
$$
\norm{\sup_{t>0}\mod{\psi_{\alpha}(tL_{A})f}}{p}\leq C\norm{f}{p},\quad \forall f\in L^{p}(\Omega).
$$
\end{proposition}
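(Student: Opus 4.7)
The plan is to apply Cowling's subordination formula \eqref{eq: MM} directly to the multiplier $m=\psi_{\alpha}$, where $\psi_{\alpha}(\lambda)=\lambda^{\alpha}e^{-\lambda}$. First, one checks that $\psi_{\alpha}\in L^{1}(\R_{+},\wrt\lambda/\lambda)$ for every $\alpha>0$, since $\int_{0}^{\infty}\lambda^{\alpha-1}e^{-\lambda}\wrt\lambda=\Gamma(\alpha)<\infty$. In view of \eqref{eq: MM1}, the Mellin transform of $\psi_{\alpha}$ is explicitly
\[
[\cM\psi_{\alpha}](u)=\int_{0}^{\infty}\lambda^{\alpha-iu-1}e^{-\lambda}\wrt\lambda=\Gamma(\alpha-iu),\quad u\in\R.
\]
Stirling's asymptotic formula then yields $|\Gamma(\alpha-iu)|\lesssim_{\alpha}(1+|u|)^{\alpha-1/2}e^{-\pi|u|/2}$ as $|u|\to\infty$.

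Next I invoke Proposition~\ref{p: M Mixed}: since $A\in\cA_{p}(\Omega)$, the operator $L_{A}$ admits bounded imaginary powers on $L^{p}(\Omega)$ with the bound \eqref{eq: M impowers} for some angle $\theta_{p}<\pi/2$ depending only on $p$ and the $p$-ellipticity constants of $A$. Combining the two ingredients, and using $|t^{iu}|=1$, the formula \eqref{eq: MM} gives the pointwise bound
\[
\sup_{t>0}\mod{\psi_{\alpha}(tL_{A})f(x)}\leq\frac{1}{2\pi}\int_{\R}\mod{\Gamma(\alpha-iu)}\mod{L_{A}^{iu}f(x)}\wrt u,
\]
valid a.e.\ on $\Omega$ for $f$ in a suitable dense class of $L^{p}(\Omega)$. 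Taking $L^{p}$-norms and applying Minkowski's integral inequality together with \eqref{eq: M impowers} yields
\[
\Nor{\sup_{t>0}\mod{\psi_{\alpha}(tL_{A})f}}_{p}\leq\frac{C}{2\pi}\,\nor{f}_{p}\int_{\R}\mod{\Gamma(\alpha-iu)}\,e^{\theta_{p}|u|}\wrt u.
\]
The integral on the right is finite precisely because $\theta_{p}<\pi/2$ while $|\Gamma(\alpha-iu)|$ decays like $e^{-\pi|u|/2}$, with polynomial correction factor integrated against the exponential surplus.

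The one step that requires care is the justification of the pointwise identity \eqref{eq: MM} for $f\in L^{p}(\Omega)$; the formula is originally a spectral/contour-integral identity. Following \cite{cowling} (and as recalled in Section~\ref{s: M Cowling}), one first establishes it for $f$ in the dense subspace $L^{2}(\Omega)\cap L^{p}(\Omega)$ (or even for $f\in\Ran(L_{A})\cap L^{p}(\Omega)$, then using the standing assumption $N_{2}(L_{I})=\{0\}$ and the decomposition $L^{p}(\Omega)=\overline{\Ran_{p}(L_{A})}$ to extend), by writing $\psi_{\alpha}(tL_{A})$ as a contour integral via McIntosh's functional calculus and pushing the contour to the imaginary axis. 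Once the bound above is proved on this dense subspace with constant independent of $f$, the standard a.e.\ extension for positive sublinear operators gives the conclusion on all of $L^{p}(\Omega)$. This density/extension step is the main technical hurdle, but it is routine given the bounded $H^{\infty}$-calculus of $L_{A}$ on $L^{p}(\Omega)$ furnished by Proposition~\ref{p: M Mixed}.
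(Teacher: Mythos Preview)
Your proposal is correct and follows essentially the same route as the paper: compute the Mellin transform $[\cM\psi_{\alpha}](u)=\Gamma(\alpha-iu)$, invoke Stirling's asymptotics for its decay, and combine Cowling's subordination \eqref{eq: MM2} with the imaginary-power bound \eqref{eq: M impowers} from Proposition~\ref{p: M Mixed}. The paper's proof is terser and omits the density/extension discussion you included, but the argument is the same.
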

\begin{proof} 
We use Cowling's subordination \eqref{eq: MM2}. The Mellin transform \eqref{eq: MM1} of $\psi_{\alpha}$ is 
\begin{equation}
\label{eq: MT_psi}
[\cM\psi_{\alpha}](u)=\Gamma(\alpha-iu),
\hskip 30pt
\text{ for all }u\in\R, 
\end{equation}
where $\Gamma$ denotes the Euler gamma function. Stirling's formula gives
\begin{equation}\label{eq: M Stirling}
\mod{\Gamma(\alpha-iu)} \sim\sqrt{2\pi}\,|u|^{\alpha-1/2}e^{-\frac{\pi}{2}|u|},\quad \textrm{for } |u|\rightarrow \infty.
\end{equation}
Therefore, 
$$
\norm{\sup_{t>0}\mod{\psi_{\alpha}(tL_{A})f}}{p}\leqsim \int_{\R}(1+|u|)^{\alpha-1/2}e^{-\frac{\pi}{2}|u|}\norm{L^{iu}_{A}f}{p}\wrt u.
$$
Now the desired result follows from Proposition~\ref{p: M Mixed}.
\end{proof}

\begin{corollary}\label{c: M maximal real} Let $B\in\cA(\Omega)$ be a real matrix. Then $\oM^{B}$ is bounded in $L^{p}(\Omega)$, for every $p\in (1,+\infty]$.
\end{corollary}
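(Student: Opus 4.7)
The plan is to handle the endpoint $p=\infty$ directly and to reduce the range $1<p<\infty$ to the two one-variable estimates already available, via the splitting \eqref{eq: M comparison}. Since $B$ has real coefficients, the Beurling--Deny/Ouhabaz criteria (see \cite{Ouhabook}) imply that $(T^B_t)_{t>0}$ is simultaneously positivity preserving and contractive on $L^q(\Omega)$ for every $q\in[1,\infty]$. The case $p=\infty$ follows immediately from $L^\infty$-contractivity: $\mod{T^B_tf(x)}\le\|f\|_\infty$ pointwise.

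For $1<p<\infty$ I would first observe that, by the intersection description recalled just after \eqref{eq: kabuto}, real elliptic matrices belong to $\cA_p(\Omega)$ for every $p\in[2,\infty)$, and then by the duality $\cA_p(\Omega)=\cA_{p'}(\Omega)$ for every $p\in(1,\infty)$. Applying \eqref{eq: M comparison}, the task reduces to the $L^p$-boundedness of the two operators
\[
f\mapsto \oE^B f \qquad \text{and} \qquad f\mapsto \sup_{t>0}\mod{\psi_1(tL_B)f}.
\]
The second one is controlled at once by Proposition~\ref{p: M maximal domination} with $\alpha=1$, which applies uniformly in $p$ thanks to the $p$-ellipticity of $B$ just noted. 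The first one is the maximal ergodic operator of a positive contraction semigroup on every $L^q(\Omega)$, so its $L^p$-boundedness for $1<p<\infty$ is delivered by the classical Hopf--Dunford--Schwartz maximal ergodic theorem \cite[Chap.~VIII]{DunfordSchwartz1}.

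The only mildly non-trivial point is the invocation of positivity and $L^\infty$-contractivity for $(T^B_t)_{t>0}$: one must check that each of the three admissible form domains $\oV$ described in (a)--(c) satisfies Ouhabaz's lattice-type invariance criteria, so that the real-coefficient hypothesis does translate into the sub-Markovian property needed for Hopf--Dunford--Schwartz. This, however, is a standard verification, and all remaining ingredients are already gathered in Section~\ref{s: prelim}.
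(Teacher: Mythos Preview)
Your proposal is correct and follows essentially the same approach as the paper's proof: both use the splitting \eqref{eq: M comparison}, invoke Hopf--Dunford--Schwartz for $\oE^{B}$ via the sub-Markovian property of $(T^{B}_{t})_{t>0}$ (citing \cite{Ouhabook}), apply Proposition~\ref{p: M maximal domination} with $\alpha=1$ for the $\psi_{1}$ term, and treat $p=\infty$ directly from $L^{\infty}$-contractivity. Your additional remark on verifying the invariance criteria for the three form domains $\oV$ is a reasonable caveat but is exactly what the paper subsumes under its citation of \cite{Ouhabook}.
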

\begin{proof}
As remarked before, when $B$ is real elliptic, $(T^{B}_{t})_{t>0}$ is positive and contractive on $L^{\infty}(\Omega)$ and on $L^{1}(\Omega)$ by the duality $(T^{B}_{t})^{*}=T^{B^{*}}_{t}$ \cite{Ouhabook}. Hence, by the Hopf--Dunford--Schwartz theorem, $\oE^{B}$ is bounded in $L^{p}(\Omega)$ for all $p>1$. Moreover, $B\in\cA_{p}(\Omega)$ for all $p>1$ \cite[p. 3179]{CD-DivForm}, therefore by Proposition~\ref{p: M maximal domination} the maximal operator $f\mapsto\sup_{t>0}\mod{\psi_{1}(tL_{B})f}$ is bounded in $L^{p}(\Omega)$ for all $p>1$. The desired result for $p\neq \infty$ now follows from \eqref{eq: M comparison}, while the boundedness of the maximal operator in $L^{\infty}(\Omega)$ trivially follows from the contractivity of $(T^{B}_{t})_{t>0}$ on $L^{\infty}(\Omega)$.
\end{proof}

\subsection{Complex rotations of real elliptic matrices}\label{s: M complex rot} In the case of a {\it symmetric contraction semigroup} $(S_{t})_{t>0}$ on some $\sigma$-finite measure space, Cowling \cite{cowling} observed that the subordination technique explained in Section~\ref{s: M Cowling} can be used for proving the boundedness on $L^{p}(\Omega)$, $1<p<\infty$, of the maximal operator 
$$
f\mapsto \underset{|\arg(z)|<\omega_{p}-\epsilon}{\sup}\mod{S_{z}f},
$$
where $\omega_{p}=\pi/2-\omega^{*}_{p}$ and $\omega^{*}_{p}$ denotes the angle of the $H^{\infty}$-calculus in $L^{p}$ of the negative generator of $(S_{t})_{t>0}$. Recall that by \cite{CD-mult}, for every symmetric contraction semigroup we have $\omega^{*}_{p}\leq \phi^{*}_{p}:=\arcsin\mod{1-2/p}$. Despite the fact that for a {\bf real} elliptic matrix $B$, in general, the semigroup $(T^{B}_{t})_{t>0}$ is not symmetric, Cowling's argument can be easily adapted and in combination with Proposition~\ref{p: M Mixed} gives the following extension of Corollary~\ref{c: M maximal real}.
\begin{proposition}\label{p: M complex rot}
Let $B\in\cA(\Omega)$ be a real matrix. Suppose that $1<p<\infty$ and $0\leq \theta<\pi/2$ are such that $e^{i\theta}B\in\cA_{p}(\Omega)$. Then $\oM^{e^{\pm i\theta}B}$ is bounded on $L^{p}(\Omega)$.
\end{proposition}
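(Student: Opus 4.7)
My plan is to combine a triangle inequality with Cowling's subordination formula \eqref{eq: MM2}, exploiting the fact that for $A = e^{\pm i\theta}B$ one has $L_A = e^{\pm i\theta}L_B$, so the two generators share the same functional calculus up to a scalar rotation. Starting from the triangle inequality
\[
\oM^A f \leq \oM^B f + \sup_{t>0}\bigl|\,T^A_t f - T^B_t f\,\bigr|,
\]
the first term is controlled on $L^p(\Omega)$ by Corollary~\ref{c: M maximal real}, since $B$ is real elliptic. For the second, I will use the identification $T^A_t - T^B_t = m_{\pm\theta}(tL_B)$, where $m_{\pm\theta}$ is the multiplier in \eqref{eq: M multiplier}. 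Because $m_{\pm\theta}(\lambda)$ vanishes linearly as $\lambda\to 0$ and decays exponentially as $\lambda\to\infty$, we have $m_{\pm\theta}\in L^1(\R_+,\wrt\lambda/\lambda)$, so Cowling's formula applies with $\oA = L_B$. A contour-deformation computation (valid for $|\theta|<\pi/2$) yields $[\cM m_{\pm\theta}](u) = (e^{\mp\theta u}-1)\Gamma(-iu)$, reducing the matter to showing finiteness of
\[
\int_\R |[\cM m_{\pm\theta}](u)|\cdot \|L_B^{iu}\|_{p\to p}\wrt u.
\]

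Stirling's asymptotic $|\Gamma(-iu)|\sim|u|^{-1/2}e^{-\pi|u|/2}$ combined with the factor $|e^{\mp\theta u}-1|\lesssim\min(1,e^{\theta|u|})$ shows the integrand behaves essentially like $|u|^{-1/2}\min(1,e^{\theta|u|})e^{-\pi|u|/2}\|L_B^{iu}\|_p$. The hard part is that on the side of the origin where $|e^{\mp\theta u}-1|$ grows like $e^{\theta|u|}$, the naive imaginary-powers bound $\|L_B^{iu}\|_p\leq Ce^{\theta_p^B|u|}$ supplied by Proposition~\ref{p: M Mixed} applied to $B$ alone can fail to keep the integrand integrable, precisely when $\theta+\theta_p^B\geq\pi/2$.

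My remedy is to invoke Proposition~\ref{p: M Mixed} a second time, this time applied to $A$ itself. From $L_B = e^{\mp i\theta}L_A$ one gets $L_B^{iu} = e^{\pm\theta u}L_A^{iu}$, and the hypothesis $A\in\cA_p(\Omega)$ gives $\|L_A^{iu}\|_p\leq Ce^{\theta_p^A|u|}$ with $\theta_p^A<\pi/2$, hence the alternative bound $\|L_B^{iu}\|_p\leq Ce^{\pm\theta u+\theta_p^A|u|}$. On the problematic side the exponential $e^{\pm\theta u}=e^{-\theta|u|}$ exactly cancels the growing factor in the Mellin transform, dominating the integrand by $|u|^{-1/2}e^{(\theta_p^A-\pi/2)|u|}$; on the opposite side $|e^{\mp\theta u}-1|\leq 1$ and the direct $L_B$ estimate gives $|u|^{-1/2}e^{(\theta_p^B-\pi/2)|u|}$. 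Since both $\theta_p^A$ and $\theta_p^B$ are strictly less than $\pi/2$, both contributions are integrable, and assembling them completes the argument.
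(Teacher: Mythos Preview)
Your proof is correct and follows essentially the same route as the paper: split via $\oM^{B}$, identify the remainder as $m_{\pm\theta}(tL_{B})$, apply Cowling's subordination \eqref{eq: MM2}, and control $\|L_{B}^{iu}\|_{p}$ by invoking Proposition~\ref{p: M Mixed} for $A=e^{\pm i\theta}B$ rather than for $B$. The paper's only cosmetic difference is that it rewrites the Mellin transform as $i\theta\,\frac{e^{\mp\theta u}-1}{\theta u}\,\Gamma(1-iu)$ (absorbing the removable singularity at $u=0$) and records the symmetric bound $|[\cM m_{\pm\theta}](u)|\leq C(1+|u|)^{1/2}e^{(\theta-\pi/2)|u|}$, whereas you split the $u$-axis into the growing and decaying sides of $e^{\mp\theta u}$; the substance is identical.
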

\begin{proof}

Recall the definition \eqref{eq: M multiplier} of $m_{\pm\theta}$. Then,
$$
\oM^{e^{\pm i\theta}B}f\leq \oM^{B}f+\sup_{t>0}\mod{m_{\pm\theta}(tL_{B})f}.
$$
By Corollary~\ref{c: M maximal real} the maximal operator $\oM^{B}$ is bounded on $L^{p}(\Omega)$. 

As for the maximal operator associated with $m_{\pm\theta}$, we have
$$
[\cM m_{\pm \theta}](u)=i\theta\left(\frac{e^{\mp\theta\,u}-1}{\theta u}\right)\Gamma(1-iu),\quad u\in\R.
$$
Hence by \eqref{eq: M Stirling},
$$
\mod{[\cM m_{\pm \theta}](u)}\leqsim (1+|u|)^{1/2}e^{(\theta-\pi/2)|u|},\quad \forall u\in\R.
$$
Now use the assumptions, the fact that $\cA_{p}(\Omega)$ is invariant under conjugation \cite[Corollary 5.17]{CD-DivForm}, the identity $L_{e^{\pm i\theta}B}=e^{\pm i\theta}L_{B}$, Cowling's subordination \eqref{eq: MM2} applied with $\oA=L_{B}$ and $m=m_{\pm\theta}$, and Proposition~\ref{p: M Mixed} applied with $A=e^{\pm i\theta}B$.
\end{proof}
\begin{remark}
Let $B$ a real elliptic matrix. By \cite[Propositions~5.13~and~5.21]{CD-DivForm} $e^{\pm i\theta}B$ is $p$-elliptic precisely when $\theta<\pi/2-\vartheta^{*}_{p}$, where $\vartheta^{*}_{p}=\vartheta^{*}_{p}(B)$ is given by \cite[(3.7)]{CD-OU}. This is consistent with the general result for symmetric contractions \cite{CD-mult} explained above, because $T^{e^{\pm i\theta}B}_{t}=T^{B}_{te^{\pm i\theta}}$ and when the real matrix $B$ is symmetric we have $\vartheta^{*}_{p}=\phi^{*}_{p}=\arcsin\mod{1-2/p}$.
\end{remark}

\section{Proof of Theorem~\ref{t: M princ}}\label{s: M proof} The proof of Proposition~\ref{p: M complex rot} is based on the fact that $T^{e^{\pm i\theta}B}_{t}-T^{B}_{t}$ coincides with the multiplier $m_{\pm \theta}(tL_{B})$ of the generator $L_{B}$: this is clearly false when we replace the complex rotations of the real matrix $B$ by a general complex $p$-elliptic matrix $A$, since $T^{A}_{t}$ and $T^{B}_{t}$ may not commute and the generators $L_{A}$ and $L_{B}$ may not have a joint functional calculus. As we already explained in Section~\ref{s: M main ideas}, despite this difficulty, we prove Theorem~\ref{t: M princ}
by combining Corollary~\ref{c: M maximal real} with Proposition~\ref{p: M Mixed} and using Cowling's subordination \eqref{eq: MM} appropriately. To do that, another ingredient is needed: a classic result of Kato \cite{Kato61} concerning the domain of the fractional powers of Kato-sectorial operators, which we state below in the particular case we will use in this paper.  
\medskip

For every $\alpha\in [0,1/2]$ denote by $$\oV_{\alpha}=\left[L^{2}(\Omega),\oV\right]_{2\alpha}$$ the complex interpolation space of index $2\alpha$ between $L^{2}(\Omega)$ and the domain $\oV$ of the sesquilinear form associated with $L_{A}$, $A\in\cA(\Omega)$.
\begin{lemma}[Kato~\cite{Kato61} and Lions~\cite{Lions62}]\label{l: M Kato61}
Let $A,B\in\cA(\Omega)$ and $0<\alpha<1/2$. Then
$$
\Dom_{2}(L^{\alpha}_{A})=\Dom_{2}(L^{\alpha}_{B})=\oV_{\alpha},
$$
and there exist $C_{\alpha},C^{\prime}_{\alpha}>0$ depending only on $\alpha$ and the ellipticity constants of $A$ and $B$ such that
\begin{equation}\label{eq: M fract comp}
C_{\alpha}\norm{L^{\alpha}_{A}f}{2}\leq \norm{L^{\alpha}_{B}f}{2}\leq C^{\prime}_{\alpha} \norm{L^{\alpha}_{A}f}{2},\quad \forall f\in\oV_{\alpha}.
\end{equation}
Moreover, for every $\alpha_{0}\in [0,1/2)$ we have $\sup_{0\leq \alpha\leq \alpha_{0}}\left(\mod{C_{\alpha}}^{-1}+\mod{C^{\prime}_{\alpha}}\right)<+\infty$.
\end{lemma}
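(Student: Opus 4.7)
The plan is to derive the statement from the classical Kato–Lions theory of fractional powers of m-accretive operators \cite{Kato61, Lions62}. First I would record that both $L_A$ and $L_B$ are m-accretive, sectorial of angle strictly less than $\pi/2$, and associated with closed sectorial sesquilinear forms on the \emph{common} domain $\oV$. The operator domains $\Dom_2(L_A), \Dom_2(L_B)$ and their half-fractional analogues $\Dom_2(L_A^{1/2}), \Dom_2(L_B^{1/2})$ may well differ: identifying $\Dom_2(L_A^{1/2})$ with $\oV$ is precisely the Kato square root problem for complex coefficients, still open in this level of generality. This is why the restriction $\alpha<1/2$ in the lemma is essential.

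The core step is the identification $\Dom_2(L_A^\alpha) = [L^2(\Omega), \oV]_{2\alpha}$ for every $0<\alpha<1/2$. Following Lions, I would realize the complex interpolation space via traces of suitably weighted vector-valued functions taking values in $\oV$, and use the resolvent $(t+L_A)^{-1}$ to construct explicit lifting and trace maps. Equivalently, one works directly with the Balakrishnan representation
\begin{equation*}
L_A^\alpha f = \frac{\sin(\pi\alpha)}{\pi}\int_0^\infty t^{\alpha-1} L_A(t+L_A)^{-1}f\wrt t,\qquad 0<\alpha<1,
\end{equation*}
combined with the uniform bound $\|L_A(t+L_A)^{-1}\|_{2\to 2}\leq M$ and the form-domain estimate $\|(t+L_A)^{-1} f\|_\oV \leq M' t^{-1/2}\|f\|_2$ (both consequences of sectoriality, with constants depending only on $\lambda(A)$ and $\Lambda(A)$). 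This yields $\Dom_2(L_A^\alpha) = \cD_\alpha$ with equivalent norms, the constants being explicit in $\alpha$, $\sin(\pi\alpha)$, $\Gamma(\alpha)$, $\Gamma(1-\alpha)$, and the ellipticity constants of $A$.

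Applying the same argument to $L_B$ and chaining the equivalences $\|L_A^\alpha f\|_2 \asymp \|f\|_{\cD_\alpha}\asymp \|L_B^\alpha f\|_2$ gives \eqref{eq: M fract comp}. The main delicate point I anticipate is the tracking of the $\alpha$-dependence of all constants: the Balakrishnan kernel, the interpolation constants, and the form-domain bounds each produce factors continuous in $\alpha\in[0,1/2)$ but blowing up as $\alpha\to 1/2^-$, reflecting the Kato obstruction. The uniform bound $\sup_{0\leq\alpha\leq\alpha_0}(|C_\alpha|^{-1}+|C'_\alpha|)<+\infty$ for $\alpha_0\in[0,1/2)$ then follows from continuity of these factors on the compact set $[0,\alpha_0]$, the endpoint case $\alpha=0$ being trivial since $L_A^0 = L_B^0 = I$.
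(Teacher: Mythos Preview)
Your outline is correct and follows the classical Kato--Lions route: Balakrishnan's integral representation plus resolvent and form-domain bounds to identify $\Dom_2(L_A^\alpha)$ with the interpolation space $[L^2,\oV]_{2\alpha}$, then chain through $\cD_\alpha$. This is essentially the original argument in \cite{Kato61, Lions62}.

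The paper's appendix, however, takes a genuinely different path. Rather than passing through the interpolation space at all for the norm equivalence \eqref{eq: M fract comp}, it proves directly that $\|\oL_{\gota_2}^\beta x\|\lesssim_\beta \|\oL_{\gota_1}^\beta x\|$ by a duality-plus-square-function argument: one writes $\langle x,\oL_{\gota_2^*}^\beta y\rangle$ as $-\int_0^\infty \frac{\wrt}{\wrt t}\langle T_t^{\gota_1}x, T_t^{\gota_2^*}\oL_{\gota_2^*}^\beta y\rangle\wrt t$, splits the derivative into two pieces, and controls each by the McIntosh square functions $G_{\gota,\gamma}$ associated with $\psi_\gamma(\lambda)=\lambda^\gamma e^{-\lambda}$. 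The equivalence $\Re\gota_1\sim\Re\gota_2$ on the common form domain is what allows one to jump between the two forms inside the integrals. Only afterwards is the interpolation identification recovered by specializing $\gota_2$ to the symmetric part $(\gota_1+\gota_1^*)/2$, whose operator is self-adjoint so that $\Dom(\oH^{1/2})=\oV$ is automatic. Your approach establishes the interpolation identity first and deduces the norm comparison from it; the paper does the reverse. The square-function proof has the advantage of making the blow-up at $\alpha\to 1/2^-$ completely explicit (a factor $1/(1/2-\beta)$ emerges from integrating $s^{\beta-1}$ over $[1,\infty)$), and it stays within the $H^\infty$-calculus framework used elsewhere in the paper; your route is more self-contained and closer to the original sources.
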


Recall that for every $A\in \cA(\Omega)$ we have ${\rm N}_{2}(L_{A})={\rm N}_{2}(L_{I})$ and that for simplicity in this paper we always assume that $\Omega$ and $\oV$ are such that ${\rm N}_{2}(L_{I})=\{0\}$. Therefore $\overline{{\rm R}}_{2}(L_{A})=L^{2}(\Omega)$ and \eqref{eq: M fract comp} is equivalent to the fact that
$$
U^{B,A}_{\alpha}:=L^{\alpha}_{B}L^{-\alpha}_{A}
$$
extends to a bounded (invertible) operator in $L^{2}(\Omega)$, for all $\alpha\in (0,1/2)$ and all $A,B\in\cA(\Omega)$. This together with a standard complex interpolation argument based on \eqref{eq: M impowers} gives the next proposition which extends the $L^{2}$-boundedness of $U^{B,A}_{\alpha}$ to the $L^{p}$-boundedness in the range of $p$-ellipticity of $A$ and $B$. 

\begin{proposition}\label{p: M frac transfer}
Let $p>1$. Suppose that $A,B\in\cA_{p}(\Omega)$. Then there exists $\alpha=\alpha(p)\in (0,1/2)$ such that $U^{B,A}_{\alpha}$ extends to a bounded operator in $L^{p}(\Omega)$.
\end{proposition}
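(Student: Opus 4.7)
The plan is to apply Stein's complex interpolation theorem to the analytic family
\[
T_{z} := L_{B}^{z} L_{A}^{-z}, \qquad 0 \leq \Re z \leq \alpha_{0},
\]
for a suitable $\alpha_{0} \in (0, 1/2)$. The two inputs driving the argument are Lemma~\ref{l: M Kato61}, which controls $T_{z}$ on $L^{2}$ along the real direction, and Proposition~\ref{p: M Mixed}, which controls the imaginary powers $L_{A}^{iv}$, $L_{B}^{iv}$ on every $L^{q}$ with $A, B \in \cA_{q}(\Omega)$.

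The preliminary observation is that $\Delta_{p}(A)$ depends continuously on $|1 - 2/p|$, cf.~\eqref{eq: kabuto}, so the condition $A \in \cA_{p}(\Omega)$ is open in $p$. Hence there exists $\tilde{p}$ strictly farther from $2$ than $p$ with $A, B \in \cA_{\tilde{p}}(\Omega)$. Without loss of generality $p \geq 2$ (since $\cA_{p} = \cA_{p^{\prime}}$), so $\tilde{p} > p \geq 2$. Next, the factorization
\[
T_{\alpha + iv} = L_{B}^{iv} \bigl( L_{B}^{\alpha} L_{A}^{-\alpha} \bigr) L_{A}^{-iv},
\]
combined with Lemma~\ref{l: M Kato61} and \eqref{eq: M impowers} applied on $L^{2}$, yields $\norm{T_{z}}{L^{2} \to L^{2}} \leq C e^{c |\Im z|}$ uniformly for $\Re z \in [0, \alpha_{0}]$; likewise, applying \eqref{eq: M impowers} on $L^{\tilde{p}}$ to both $L_{A}$ and $L_{B}$ gives $\norm{T_{iv}}{L^{\tilde{p}} \to L^{\tilde{p}}} \leq C e^{c |v|}$ on the line $\Re z = 0$. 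Multiplying the family by $e^{\gamma z^{2}}$ for sufficiently large $\gamma$ ensures the admissibility of the growth required by Stein's theorem, and does not affect boundedness at the real point $z = \alpha$.

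Stein's complex interpolation on the strip $\{0 \leq \Re z \leq \alpha_{0}\}$, with boundary bounds on $L^{\tilde{p}}$ at $\Re z = 0$ and on $L^{2}$ at $\Re z = \alpha_{0}$, then yields that $T_{z}$ extends to a bounded operator on $L^{q_{\theta}}(\Omega)$ at $\Re z = \theta \alpha_{0}$ for every $\theta \in (0, 1)$, where
\[
\frac{1}{q_{\theta}} = \frac{1 - \theta}{\tilde{p}} + \frac{\theta}{2}.
\]
Since $1/\tilde{p} < 1/p < 1/2$, the intermediate value theorem supplies $\theta_{\ast} \in (0, 1)$ with $q_{\theta_{\ast}} = p$. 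Setting $\alpha := \theta_{\ast} \alpha_{0} \in (0, 1/2)$, we conclude that $U^{B, A}_{\alpha} = L_{B}^{\alpha} L_{A}^{-\alpha}$ extends to a bounded operator on $L^{p}(\Omega)$. The main technical obstacle is the verification that $(T_{z})$ is an admissible analytic family on a common dense subspace (the complex powers $L_{A}^{-z}$ being unbounded on each single $L^{q}$-space); this is handled via the consistency of the complex powers across the relevant $L^{q}$-scale noted in Section~\ref{s: prelim}, together with the $e^{\gamma z^{2}}$-regularization.
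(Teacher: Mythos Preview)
Your proposal is correct and follows essentially the same route as the paper: Stein's complex interpolation applied to the analytic family $e^{\gamma z^{2}}L_{B}^{z}L_{A}^{-z}$, with $L^{2}$-bounds on the whole strip coming from Lemma~\ref{l: M Kato61} together with bounded imaginary powers, and $L^{p}$- (resp.\ $L^{\tilde p}$-) bounds on the line $\Re z=0$ coming from Proposition~\ref{p: M Mixed}; the openness of $p$-ellipticity is then invoked to land exactly at the target exponent. The only cosmetic differences are that the paper normalises to $1<p<2$ rather than $p\geq 2$, applies openness after the interpolation rather than before, and---more substantively---spells out in detail the verification that $z\mapsto L_{B}^{z}L_{A}^{-z}f$ is continuous on the closed strip and holomorphic in its interior for $f$ in a dense subspace (your last paragraph acknowledges this point but does not carry it out).
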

\begin{proof}
For simplicity, suppose that $1<p<2$. For every $\alpha>0$ define the strip
$$
\Sigma_{\alpha}=\{z\in\C:\ 0<\Re z<\alpha\}.
$$
Fix $\alpha_{0}\in (0,1/2)$ and consider the family of operators 
$$
S_{z}=e^{z^{2}}L^{i\Im z}_{B}U^{B,A}_{\Re z}L^{-i\Im z}_{A},\quad z\in\Sigma_{\alpha_{0}}.
$$
The imaginary powers of $L_{A}$ and $L_{B}$ are bounded in $L^{2}(\Omega)$ (this is a consequence of \cite{CrDe}, but it also follows from \eqref{eq: M impowers} applied with $p=2$) and, by Lemma~\ref{l: M Kato61}, the operators $U^{B,A}_{\Re z}$ are bounded in $L^{2}(\Omega)$ uniformly in $z\in\overline{\Sigma}_{\alpha_{0}}$. Hence,
\begin{equation}\label{eq: M auxi}
\norm{S_{z}}{2}\leq C(\alpha_{0}),\quad \forall z\in\overline{\Sigma}_{\alpha_{0}}.
\end{equation}
Moreover, by \eqref{eq: M impowers} we have
$$
\norm{S_{i\sigma}}{p}\leq C_{p},\quad \forall \sigma\in\R.
$$
Suppose we have already proved that for all $f\in L^{2}(\Omega)$ the function 
$$
z\mapsto S_{z}f\in L^{2}(\Omega)
$$
is continuous and bounded on $\overline{\Sigma}_{\alpha_{0}}$, and holomorphic in the interior of the strip. Then, by Stein's complex interpolation theorem, for every $q\in (p,2)$ there exist $\alpha(q)\in (0,\alpha_{0})$ and $C_{q}>0$ such that
$$
\norm{U^{B,A}_{\alpha(q)}}{q}\leq C_{q}.
$$
This proves the proposition, because  the interval of $p$-ellipticity of $A$ and $B$ is open \cite{CD-DivForm}.
\medskip

It remains to show that $z\mapsto S_{z}f\in L^{2}(\Omega)$ is continuous and bounded on $\overline{\Sigma}_{\alpha_{0}}$, and holomorphic in the interior of the strip. We first show that
\begin{equation}\label{eq: M fract1}
S_{z}f=e^{z^{2}}L^{z}_{B}L^{-z}_{A}f,\quad \forall z\in\Sigma_{1/2},\quad \forall f\in\Ran_{2}(L_{A}).
\end{equation}
Indeed, since the imaginary powers of  $L_{B}$ are bounded in $L^{2}(\Omega)$, we have $\Dom(L^{z}_{B})=\Dom(L^{\Re z}_{B})$ and $L^{z}_{B}=L^{i\Im z}_{B}L^{\Re z}_{B}$; see, for example, \cite[Proposition~3.2.1 (c)]{Haase} applied with $\alpha=i\Im z$ and $\beta=\Re z$. Therefore, by Lemma~\ref{l: M Kato61} we have
\begin{equation}\label{eq: M fract2}
L^{z}_{B}=L^{i\Im z}_{B}L^{\Re z}_{B}L^{-\Re z}_{A}L^{\Re z}_{A}=L^{i\Im z}_{B}U^{B,A}_{\Re z}L^{\Re z}_{A}.
\end{equation}

Using once again \cite[Proposition~3.2.1 (c)]{Haase} we deduce that
\begin{equation}\label{eq: M fract3}
L^{\Re z}_{A}L^{-z}_{A}\subset \overline{L^{\Re z}_{A}L^{-z}_{A}}=L^{\Re z-z}_{A}=L^{-i\Im z}_{A}.
\end{equation}
By \cite[Proposition~3.1.1 (c)]{Haase} we have $L_{A}=L^{z+(1-z)}_{A}=L^{1-z}_{A}L^{z}_{A}=L^{z}_{A}L^{1-z}_{A}$, which implies
\begin{equation}\label{eq: M fract4}
\Ran_{2}(L_{A})\subseteq \Ran_{2}(L^{z}_{A}),\quad \forall z\in \overline{\Sigma}_{1}.
\end{equation}
Now \eqref{eq: M fract1} follows by combining  \eqref{eq: M fract2} with \eqref{eq: M fract3} and \eqref{eq: M fract4}.

We are now ready to study the regularity of $z\mapsto S_{z}f$. Fix $f\in \Dom_{2}(L_{A})\cap\Ran_{2}(L_{A})$ and $g\in\Dom_{2}(L_{B^{*}})\cap \Ran_{2}(L_{B^{*}})$. Let $z\in\C$ be such that $0\leq\Re z<1/2$. By Lemma~\ref{l: M Kato61} we have $\Dom_{2}(L^{z}_{A})=\Dom_{2}(L^{z}_{B})$, which together with \eqref{eq: M fract4} gives
$$
\sk{L^{z}_{B}L^{-z}_{A}f}{g}_{L^{2}}=\sk{L^{-z}_{A}f}{L^{\Bar z}_{B^{*}}g}_{L^{2}}.
$$
By \cite[Proposition~3.2.1 (f)]{Haase} (applied with $\alpha_{0}=\alpha_{1}=1/2$) the maps $z\mapsto L^{-z}_{A}f$ and $z\mapsto \overline{L^{\Bar z}_{B^{*}}g}$ are analytic in the strip $-1/2<\Re z<1/2$.

We now observe that, by assumption, $L^{2}(\Omega)=\overline{\Ran}_{2}(L_{B^{*}})$. Hence $\Dom_{2}(L_{B^{*}})\cap \Ran_{2}(L_{B^{*}})$ is dense in $L^{2}(\Omega)$. In order to see this, consider $g_{n}\in\Dom(L_{B^{*}})$ such that $L_{B^{*}}g_{n}\rightarrow f$ and set $f_{n}=n\int^{1/n}_{0}T^{B^{*}}_{s}L_{B^{*}}g_{n}\wrt s$. It follows form \eqref{eq: M fract1} and the previous considerations that for every $f\in \Dom_{2}(L_{A})\cap\Ran_{2}(L_{A})$ the map $z\mapsto S_{z}f\in L^{2}(\Omega)$ is continuous in $\overline{\Sigma}_{\alpha_{0}}$ and holomorphic in the interior of the strip. By the uniform boundedness of the family $\{S_{z}: z\in\overline{\Sigma}_{\alpha_{0}}\}$ (see \eqref{eq: M auxi}) and the density of $\Dom_{2}(L_{A})\cap\Ran_{2}(L_{A})$ in $L^{2}(\Omega)$, we deduce that the same is true for every $f\in L^{2}(\Omega)$.
\end{proof}
\subsection{Proof of Theorem~\ref{t: M princ}}\label{s: M proof of THM} Fix a real elliptic matrix $B$ (for example, $B=I$). By Corollary~\ref{c: M maximal real}, the maximal operator associated with $(T^{B}_{t})_{t>0}$ is bounded in $L^{p}(\Omega)$. Hence by \eqref{eq: M split} it suffices to prove that the maximal operator $\oM^{A,B}$ is bounded in $L^{p}(\Omega)$. Fix $f\in (L^{2}\cap L^{p})(\Omega)$. By Duhamel's formula \eqref{eq: M Duhamel} for every $t>0$ we have
$$
\aligned
\mod{T^{A}_{t}f-T^{B}_{t}f}&\leq \mod{\int^{t}_{0}L_{B}T^{B}_{s}T^{A}_{t-s}f\wrt s}+\mod{\int^{t}_{0}T^{B}_{s}L_{A}T^{A}_{t-s}f\wrt s}\\
&=\mod{I_{t}(f)}+\mod{II_{t}(f)}.
\endaligned
$$
\subsubsection{The maximal operator $\sup_{t>0}\mod{I_{t}(f)}$}\label{s: M mm1} Let $\alpha=\alpha(p)\in (0,1/2)$ be as in Proposition~\ref{p: M frac transfer}. Set $\beta=1-\alpha$. Then we have 
\begin{equation}\label{eq: M power-transfer}
I_{t}(f)=\int^{t}_{0}\psi_{\beta}(sL_{B})U^{B,A}_{\alpha}\psi_{\alpha}((t-s)L_{A})f\wrt\mu^{t}_{\alpha}(s),
\end{equation}
where the finite measures $\mu^{t}_{\alpha}$, $t>0$,  are defined by \eqref{eq: M mu}. Hence by \eqref{eq: M psi bis} we have
$$
\sup_{t}\mod{I_{t}(f)}\leq \frac{\pi}{\sin(\alpha\pi)}\sup_{0<s\leq t}\mod{\psi_{\beta}(sL_{B})U^{B,A}_{\alpha}\psi_{\alpha}((t-s)L_{A})f}.
$$
We now subordinate the operators $\psi_{\beta}(sL_{B})$ and $\psi_{\alpha}((t-s)L_{A})$ to imaginary powers by means of \eqref{eq: MM}, and by using \eqref{eq: MT_psi} we obtain
$$
\sup_{0<s\leq t}\mod{\psi_{\beta}(sL_{B})U^{B,A}_{\alpha}\psi_{\alpha}((t-s)L_{A})f}\leq \frac{1}{\pi^{2}} \int_{\R^{2}}\mod{\Gamma(\beta-iu)}\mod{\Gamma(\alpha-iv)}\mod{L^{iu}_{B}U^{B,A}_{\alpha}L^{i v}_{A}f}\wrt u\wrt v.
$$
The boundedness of $f\mapsto \sup_{t}\mod{I_{t}(f)}$ now follows by combining the Stirling's formula \eqref{eq: M Stirling} with Propositions~\ref{p: M frac transfer} and~\ref{p: M Mixed}.

\subsubsection{The maximal operator $\sup_{t>0}\mod{II_{t}(f)}$}\label{s: M mm2}
For every $\alpha\in (0,1/2)$ set 
$$
V_{\alpha}^{B,A}=\left(U^{A^{*},B^{*}}_{\alpha}\right)^{*}.
$$
A rapid calculation shows that
$$
V_{\alpha}^{B,A}\oV_{\alpha}\subseteq\oV_{\alpha}\quad \textrm{and}\quad L^{\alpha}_{B}V_{\alpha}^{B,A}f=L^{\alpha}_{A}f,\quad \forall f\in \oV_{\alpha}.
$$
Fix $\alpha=\alpha(p^{\prime})$ as in Proposition~\ref{p: M frac transfer}, but for the pair $(A^{*},B^{*})$ instead of the pair $(B,A)$. Set $\beta=1-\alpha$. Then $V_{\alpha}^{B,A}$ extends to a bounded operator on $L^{p}(\Omega)$ and
$$
II_{t}(f)=\int^{t}_{0}L^{\alpha}_{B}T^{B}_{s}V_{\alpha}^{B,A}L^{\beta}_{A}T^{A}_{t-s}f\wrt s=\int^{t}_{0}\psi_{\alpha}(sL_{B})V^{B,A}_{\alpha}\psi_{\beta}((t-s)L_{A})f\wrt\mu^{t}_{\beta}(s).
$$
Proceeding as we did in the previous section, we obtain the boundedness on $L^{p}(\Omega)$ of the maximal operator $f\mapsto \sup_{t>0}\mod{II_{t}(f)}$ that completes the proof of Theorem~\ref{t: M princ}.

\section{Range improvement in Theorem~\ref{t: M princ}}
In this section we consider an open set $\Omega\subseteq\R^{d}$, $d\geq 3$. We fix a subspace $\oV$ of the type described on page~\pageref{eq: M sesqui} and we assume that it satisfies the homogeneous embedding property:
\begin{equation}\label{eq: M embedding}
\norm{u}{2^{*}}\leqsim \norm{\nabla u}{2},\quad \forall u\in \oV,
\end{equation}
where $2^{*}=2d/(d-2)$ is the usual Sobolev upper exponent. For every $p\geq 2$ we define the two exponents
$$
p^{\circ}=\frac{pd}{d-2},\quad p_{\circ}=\frac{p^{\circ}}{p^{\circ}-1}.
$$
Note that $2^{\circ}=2^{*}$.
\begin{lemma}[\cite{Egert2018} and \cite{Elst2019}]\label{l: M Egert}
Assume that $\oV$ satisfies \eqref{eq: M embedding}. Let $p>2$ and $A\in\cA_{p}(\Omega)$. Then for all $r\in[p_{\circ},p^{\circ}]$ the semigroup $(T^{A}_{t})_{t>0}$ extends to a uniformly bounded analytic semigroup in $L^{r}(\Omega)$. Moreover, the analyticity angle does not depend on $r\in[p_{\circ},p^{\circ}]$.
\end{lemma}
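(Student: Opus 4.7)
The plan is to establish the extension in three stages: reduce the question to uniform boundedness on $L^{p^{\circ}}(\Omega)$; derive an off-diagonal $L^{p}\to L^{p^{\circ}}$ decay estimate via a Gagliardo-Nirenberg-Sobolev inequality; and then bootstrap the decay to uniform boundedness.

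\emph{Stage 1 (reduction).} By \cite[Theorem~1.3]{CD-DivForm} and $\cA_{p}(\Omega)=\cA_{p'}(\Omega)$, the semigroup $(T^{A}_{t})_{t>0}$ is already a uniformly bounded analytic semigroup on $L^{q}(\Omega)$ for every $q\in[p',p]$. Since $A^{*}\in\cA_{p}(\Omega)$ as well and $(T^{A}_{t})^{*}=T^{A^{*}}_{t}$, it suffices to prove uniform boundedness on $L^{p^{\circ}}(\Omega)$: the case $L^{p_{\circ}}(\Omega)$ then follows by duality, and the remaining exponents $q\in[p,p^{\circ}]\cup[p_{\circ},p']$ by complex interpolation, which also preserves the analyticity angle.

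\emph{Stage 2 (off-diagonal decay).} The Cialdea-Maz'ya-type expansion of $\Re\gota(f,|f|^{p-2}f)$ carried out in \cite{CD-DivForm} uses the $p$-ellipticity of $A$ to produce, for $f$ in a dense subclass of $\Dom(L_{A})\cap L^{p}(\Omega)$ (and in particular $|f|^{p/2}\in\oV$), the inequality
\[
\Re\gota(f,|f|^{p-2}f)\geq c(p,\Delta_{p}(A))\,\|\nabla(|f|^{p/2})\|_{2}^{2}.
\]
Combining this with the embedding \eqref{eq: M embedding} applied to $|f|^{p/2}$ yields the Nash-Gagliardo-Nirenberg-Sobolev inequality $\|f\|_{p^{\circ}}^{p}\leq C\,\Re\gota(f,|f|^{p-2}f)$. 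Specializing to $f=T^{A}_{t}u$ with $u\in (L^{2}\cap L^{p})(\Omega)$ and using H\"older together with the analyticity of $(T^{A}_{t})_{t>0}$ on $L^{p}(\Omega)$ (whence $\|L_{A}T^{A}_{t}u\|_{p}\leq (C/t)\|u\|_{p}$), I obtain
\[
\|T^{A}_{t}u\|_{p^{\circ}}^{p}\leq C\|L_{A}T^{A}_{t}u\|_{p}\|T^{A}_{t}u\|_{p}^{p-1}\leq (C/t)\,\|u\|_{p}^{p},
\]
i.e.\ $\|T^{A}_{t}\|_{L^{p}\to L^{p^{\circ}}}\leq Ct^{-1/p}$. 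The dual bound $\|T^{A}_{t}\|_{L^{p_{\circ}}\to L^{p'}}\leq Ct^{-1/p}$ follows by the same argument applied to $A^{*}$ and transposition.

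\emph{Stage 3 and main obstacle.} To promote the off-diagonal decay to a uniform $L^{p^{\circ}}$-bound I would apply Stein's complex interpolation theorem to an analytic operator family such as $z\mapsto e^{z^{2}}L_{A}^{iz}T^{A}_{t}$ on a strip of width $1/p$, pairing the decay estimate at $\Re z=1/p$ with the polynomial bound $\|L_{A}^{iu}\|_{p}\leq Ce^{\theta_{p}|u|}$ from Proposition~\ref{p: M Mixed} at $\Re z=0$. An alternative route, closer to \cite{Elst2019}, is to upgrade the coercivity of $\gota$ together with \eqref{eq: M embedding} directly to a sectoriality estimate for $L_{A}$ on $L^{p^{\circ}}(\Omega)$ by testing the resolvent equation against $|u|^{p^{\circ}-2}u$ after a suitable truncation. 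The main difficulty lies precisely here: a direct Riesz-Thorin interpolation of the two decay estimates from Stage 2 fails to reach the diagonal in the $(1/p,1/q)$-plane, and the usual ultracontractivity bootstraps (Varopoulos, Coulhon-Saloff-Coste) require a symmetric or sub-Markovian framework that is unavailable for general complex $A$; the argument must be tailored to the non-self-adjoint, non-positivity-preserving setting, as accomplished in \cite{Egert2018} and \cite{Elst2019}. Once uniform $L^{p^{\circ}}$-boundedness is in hand, the uniform analyticity on a sector of angle independent of $r\in[p_{\circ},p^{\circ}]$ follows from interpolation between the analytic bounds on $L^{2}$ and on $L^{p^{\circ}}$.
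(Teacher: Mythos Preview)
The paper gives no proof of this lemma: it is quoted directly from \cite{Egert2018} and \cite{Elst2019} and used as a black box. Your proposal is therefore not competing with an argument in the paper, but rather sketching the content of those references.

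Your Stages 1 and 2 are correct and do match the cited works. The duality/interpolation reduction to the endpoint $L^{p^{\circ}}$ is routine, and the off-diagonal estimate $\|T^{A}_{t}\|_{L^{p}\to L^{p^{\circ}}}\leq Ct^{-1/p}$, obtained from the $p$-ellipticity lower bound $\Re\gota(f,|f|^{p-2}f)\geq c\,\|\nabla|f|^{p/2}\|_{2}^{2}$ together with the embedding \eqref{eq: M embedding}, is precisely the Gagliardo--Nirenberg step carried out in \cite{Egert2018,Elst2019}.

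Stage 3 is where your sketch stops being a proof, as you yourself acknowledge. Moreover, the concrete Stein-interpolation family you propose is mis-specified: for $z=iu$ one has $L_{A}^{iz}=L_{A}^{-u}$, a \emph{real} power, not the imaginary power $L_{A}^{iu}$ whose bound from Proposition~\ref{p: M Mixed} you intend to invoke; presumably you mean $L_{A}^{z}$. But even after that correction the scheme does not close: Stein interpolation between an $L^{p}\to L^{p}$ bound at $\Re z=0$ and an $L^{p}\to L^{p^{\circ}}$ bound at $\Re z=1/p$ only yields $L^{p}\to L^{r}$ estimates with $p\leq r\leq p^{\circ}$, never a diagonal $L^{p^{\circ}}\to L^{p^{\circ}}$ bound. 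The actual passage from off-diagonal decay to uniform $L^{p^{\circ}}$-boundedness in \cite{Egert2018,Elst2019} is different (resolvent/semigroup interpolation combined with the openness of the $p$-ellipticity interval), and you are right to defer to those references for it. Since the paper itself does exactly that, your write-up is consistent with the paper's treatment, though not an independent proof.
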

\begin{remark}
The homogeneous Sobolev embedding \eqref{eq: M embedding} implies ${\rm N}_{2}(L_{I})=\{0\}$ and 
the very same considerations of Section~\ref{s: prelim} show that, under the assumptions of Lemma~\ref{l: M Egert}, we have
$$
\{0\}={\rm N}_{2}(L_{I})={\rm N}_{2}(L_{A})={\rm N}_{r}(L_{A}),\quad \forall r\in[p_{\circ},p^{\circ}].
$$
\end{remark}
In the next result $\vartheta_{2}$ denotes the sectoriality angle of $L_{A}$ in $L^{2}(\Omega)$. We always have
$0\leq\vartheta_{2}\leq \arccos(\lambda/\Lambda)<\pi/2$.
\begin{lemma}[\cite{Egert2018, Egert2}]\label{l: M Egert 2}
Under the assumptions of Lemma~\ref{l: M Egert}, for every $r\in[p_{\circ},p^{\circ}]$ the generator $L_{A}$ has a bounded $H^{\infty}(\bS_{\vartheta})$-calculus in $L^{r}(\Omega)$, whenever $\vartheta>\vartheta_{2}$. In particular, there exists $\vartheta_{0}\in (0,\pi/2)$ such that
\begin{equation}\label{eq: M impower sobolev}
\norm{L^{iu}_{A}}{r}\leq C(r,d)e^{\vartheta_{0}|u|}, \quad \forall u\in\R,
\end{equation}
for all $r\in[p_{\circ},p^{\circ}]$.
\end{lemma}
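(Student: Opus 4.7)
The plan is to extrapolate the bounded $H^{\infty}$-calculus from Proposition~\ref{p: M Mixed} (which holds on $L^{p}$ and, by duality, on $L^{p^{\prime}}$, since $\cA_{p}(\Omega)=\cA_{p^{\prime}}(\Omega)$) together with McIntosh's $L^{2}$-calculus at the sharp angle $\vartheta_{2}$, to the full Sobolev range $L^{r}$, $r\in [p_{\circ},p^{\circ}]$, using as a bridge the uniform analyticity of $(T^{A}_{t})_{t>0}$ on that range provided by Lemma~\ref{l: M Egert}.

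The first and main step is to establish bounded imaginary powers on $L^{r}$ for every $r\in[p_{\circ},p^{\circ}]$, with growth $\|L_{A}^{iu}\|_{r\to r}\leq Ce^{\theta_{0}|u|}$ for some $\theta_{0}\in (\vartheta_{2},\pi/2)$ independent of $r$. I would proceed in two stages. On the \emph{inner} segment $r\in[\min(p,p^{\prime}),\max(p,p^{\prime})]$, one applies Riesz--Thorin to each imaginary power separately, interpolating the known bounds on $L^{p}$, $L^{p^{\prime}}$ and $L^{2}$; the resulting exponential growth rate is a convex combination of the endpoint rates and is therefore strictly below $\pi/2$. On the \emph{outer} segments $r\in [p_{\circ},\min(p,p^{\prime}))\cup (\max(p,p^{\prime}),p^{\circ}]$, I would apply Stein's complex interpolation to the analytic family $z\mapsto e^{z^{2}}L_{A}^{z}$ on a thin strip $0\leq\Re z\leq \epsilon$: on the line $\Re z=0$ use the BIP already obtained on $L^{p}$ or $L^{p^{\prime}}$, while on the line $\Re z=\epsilon$ extract the endpoint bound from Lemma~\ref{l: M Egert}, which yields $\|tL_{A}T^{A}_{t}\|_{r\to r}\lesssim 1$ uniformly in $r\in[p_{\circ},p^{\circ}]$ and $t>0$, so that by the standard subordination formula the small positive real power $L_{A}^{\epsilon}$ is bounded on $L^{p_{\circ}}$ and on $L^{p^{\circ}}$.

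The second step upgrades BIP to a bounded $H^{\infty}(\bS_{\theta})$-calculus on $L^{r}$ for every $\theta>\vartheta_{2}$. This is the classical Pr\"uss--Sohr/CDMY principle: sectoriality of angle $\omega$ together with BIP of growth rate $\theta_{0}$ implies $H^{\infty}(\bS_{\theta})$-boundedness for every $\theta>\max(\omega,\theta_{0})$. Since Lemma~\ref{l: M Egert} guarantees that the analyticity angle on $L^{r}$ coincides with that on $L^{2}$, the sectoriality angle on $L^{r}$ equals $\vartheta_{2}$, while the first step delivers BIP with a growth rate that can be taken arbitrarily close to $\vartheta_{2}$; together these imply the $H^{\infty}(\bS_{\theta})$-calculus for every $\theta>\vartheta_{2}$, and the estimate~\eqref{eq: M impower sobolev} follows as a special case.

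The main obstacle is the outer-endpoint stage of Step~1: at $L^{p_{\circ}}$ and $L^{p^{\circ}}$ one cannot invoke Proposition~\ref{p: M Mixed} because $A$ is not assumed to be $p^{\circ}$-elliptic, so the endpoint bound on real positive powers of $L_{A}$ must be produced from the semigroup analyticity alone. The homogeneous Sobolev embedding~\eqref{eq: M embedding} is decisive here: it is precisely what enables Lemma~\ref{l: M Egert} to propagate analyticity all the way to the Sobolev exponents $p_{\circ}$ and $p^{\circ}$, thereby giving the extrapolation enough room to reach the full claimed range of $r$.
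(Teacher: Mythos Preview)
First, a meta-point: the paper does not supply a proof of this lemma. It is quoted from \cite{Egert2018,Egert2} and used as a black box, so there is no in-paper argument to compare your sketch against.

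Your proposed proof has a genuine gap in the outer-segment stage of Step~1. You assert that the uniform analyticity bound $\sup_{t>0}\|tL_{A}T^{A}_{t}\|_{r\to r}<\infty$ from Lemma~\ref{l: M Egert} implies, ``by the standard subordination formula'', that $L_{A}^{\epsilon}$ is a bounded operator on $L^{p_{\circ}}$ and on $L^{p^{\circ}}$ for small $\epsilon>0$. This is false: for any unbounded sectorial operator the fractional power $L_{A}^{\epsilon}$ with $\epsilon>0$ is itself unbounded. Analyticity of the semigroup buys you boundedness of objects such as $(tL_{A})^{\epsilon}T^{A}_{t}$ or $L_{A}^{\epsilon}(1+L_{A})^{-1}$, but not of $L_{A}^{\epsilon}$ alone. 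Hence on the line $\Re z=\epsilon$ the family $z\mapsto e^{z^{2}}L_{A}^{z}$ is not a family of bounded operators on $L^{p^{\circ}}$, and Stein interpolation cannot be run. Regularising by, say, $L_{A}^{z}(1+L_{A})^{-1}$ does not help either: on $\Re z=\epsilon$ one would then need a uniform-in-$u$ bound for $L_{A}^{iu}\cdot L_{A}^{\epsilon}(1+L_{A})^{-1}$ on $L^{p^{\circ}}$, which presupposes the very BIP you are trying to establish there.

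There is a second gap in Step~2. In a Banach space, bounded imaginary powers alone do not imply a bounded $H^{\infty}$-calculus; the CDMY and Kalton--Weis results you allude to run in the opposite direction (a bounded $H^{\infty}$-calculus forces the $H^{\infty}$-angle to coincide with the BIP type), or require additional hypotheses such as $R$-boundedness of the imaginary powers. The McIntosh equivalence between BIP and bounded $H^{\infty}$-calculus is specific to Hilbert space. So even if Step~1 went through, the passage from BIP on $L^{r}$ to a bounded $H^{\infty}(\bS_{\theta})$-calculus for every $\theta>\vartheta_{2}$ would still require a separate argument.
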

\begin{theorem}
Assume that $\oV$ satisfies \eqref{eq: M embedding}. Let $p>2$ and $A\in\cA_{p}(\Omega)$. Then the maximal operator $\oM^{A}$ is bounded in $L^{r}(\Omega)$ for all $r\in [p_{\circ},p^{\circ}]$. 
\end{theorem}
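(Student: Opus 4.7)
The plan is to repeat the proof of Theorem~\ref{t: M princ} verbatim, systematically replacing $L^p$ by $L^r$ for $r$ in the enlarged range $[p_\circ,p^\circ]$, and using Lemmas~\ref{l: M Egert} and~\ref{l: M Egert 2} as drop-in substitutes for the $p$-specific tools (extension of the semigroup to $L^p$ and Proposition~\ref{p: M Mixed}).

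First I would fix a real elliptic matrix $B$ (for instance $B=I$) and apply the triangular splitting \eqref{eq: M split}. Since $\oM^B$ is bounded in $L^r(\Omega)$ for every $r\in(1,\infty]$ by Corollary~\ref{c: M maximal real}, the task reduces to bounding $\oM^{A,B}$ in $L^r(\Omega)$ for $r\in[p_\circ,p^\circ]$.

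Second, I would establish $L^r$-analogs of the two preparatory tools used in Section~\ref{s: M proof of THM}. The analog of Proposition~\ref{p: M maximal domination}, namely $\|\sup_{t>0}|\psi_\alpha(tL_A)f|\|_r\lesssim\|f\|_r$, follows verbatim from its proof: Cowling's subordination \eqref{eq: MM2} with $m=\psi_\alpha$, Stirling's bound \eqref{eq: M Stirling}, and the exponential imaginary-power estimate \eqref{eq: M impower sobolev} from Lemma~\ref{l: M Egert 2}. For the analog of Proposition~\ref{p: M frac transfer}, I need to show that for each $r\in[p_\circ,p^\circ]$ there exists $\alpha=\alpha(r)\in(0,1/2)$ such that $U^{B,A}_\alpha=L^\alpha_B L^{-\alpha}_A$ extends boundedly to $L^r(\Omega)$. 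Here I would run Stein's complex interpolation on the analytic family $S_z=e^{z^2}L^{i\Im z}_B U^{B,A}_{\Re z}L^{-i\Im z}_A$ on a strip $\overline{\Sigma}_{\alpha_0}$, exactly as in Proposition~\ref{p: M frac transfer}: Lemma~\ref{l: M Kato61} provides the uniform $L^2$ bound along the real axis (noting that the Sobolev embedding \eqref{eq: M embedding} guarantees ${\rm N}_2(L_I)=\{0\}$, so the continuity/analyticity argument of Proposition~\ref{p: M frac transfer} carries over), and \eqref{eq: M impower sobolev} provides the $L^r$ bound for $S_{i\sigma}$ with exponential growth in $\sigma$. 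Interpolating between $L^2$ and $L^{p^\circ}$, and separately between $L^2$ and $L^{p_\circ}$, yields the desired $L^r$-boundedness for all $r\in[p_\circ,p^\circ]$.

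Third, I would carry out the Duhamel-based decomposition of Section~\ref{s: M proof of THM} unchanged: write $I_t(f)$ as in \eqref{eq: M power-transfer} with the finite-mass bound \eqref{eq: M psi bis}, subordinate both $\psi$-factors to imaginary powers via \eqref{eq: MM}, and conclude using the $L^r$-boundedness of $U^{B,A}_\alpha$ together with \eqref{eq: M impower sobolev} and Stirling. For $II_t(f)$, I would proceed dually with $V^{B,A}=(U^{A^*,B^*}_\alpha)^*$ as in Section~\ref{s: M mm2}, invoking the duality relation $(p^\circ)'=p_\circ$; this ensures that $r'\in[p_\circ,p^\circ]$ whenever $r\in[p_\circ,p^\circ]$, so that the complex-interpolation construction of the transfer operator applies equally to the adjoint family. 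The main obstacle I anticipate is precisely this $L^r$-boundedness of $U^{B,A}_\alpha$: verifying the continuity and analyticity of $z\mapsto S_z f$ on a dense subset of $L^2(\Omega)$ with the present hypotheses, and then confirming that the two-sided interpolation (to either side of $2$) delivers a single positive exponent $\alpha(r)$ in the open interval $(0,1/2)$ at every $r\in[p_\circ,p^\circ]$. Once this is in place, the remainder is a mechanical repetition of Section~\ref{s: M proof of THM}.
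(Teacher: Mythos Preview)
Your proposal is correct and matches the paper's approach exactly: fix a real $B$, reduce to $\oM^{A,B}$, extend Proposition~\ref{p: M frac transfer} to $L^r$ via Stein interpolation using the imaginary-power estimate \eqref{eq: M impower sobolev}, and rerun Sections~\ref{s: M mm1}--\ref{s: M mm2}. The endpoint concern you raise about reaching $r=p^\circ$ and $r=p_\circ$ is resolved precisely as in the original Proposition~\ref{p: M frac transfer}: since the $p$-ellipticity interval is open, $A\in\cA_{\tilde p}(\Omega)$ for some $\tilde p>p$, and applying Lemmas~\ref{l: M Egert}--\ref{l: M Egert 2} with $\tilde p$ in place of $p$ furnishes \eqref{eq: M impower sobolev} on the strictly larger interval $[\tilde p_\circ,\tilde p^\circ]\supsetneq[p_\circ,p^\circ]$, so the interpolation delivers a positive $\alpha(r)$ at every $r\in[p_\circ,p^\circ]$.
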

\begin{proof}
Exactly as we did in the proof of Theorem~\ref{t: M princ}, we fix a real matrix $B$ and reduce the proof to showing that the maximal operator $\oM^{A,B}$ is bounded in $L^{r}(\Omega)$. By using \eqref{eq: M impower sobolev} it is easy to see that Proposition~\ref{p: M frac transfer} extends to the range $[p_{\circ},p^{\circ}]$ and, using again \eqref{eq: M impower sobolev}, we can now repeat the arguments in Sections~\ref{s: M mm1} and \ref{s: M mm2}.
\end{proof}

\section{Two-parameter maximal operator}
Suppose that $A_{1},A_{2}\in\cA_{p}(\Omega)$. Define the associated two-parameter maximal operator by the rule 
$$
\gotM^{A_{1},A_{2}}f=\sup_{w,t>0}\mod{T^{A_{1}}_{w}T^{A_{2}}_{t}f}.
$$
\begin{theorem}\label{t: AppB tp}Let $p\in (1,\infty)$ and $A_{1},A_{2}\in\cA_{p}(\Omega)$. Then $\gotM^{A_{1},A_{2}}$ is bounded on $L^{p}(\Omega)$. 
\end{theorem}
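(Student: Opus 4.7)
The plan is to carry out the scheme used in the proof of Theorem~\ref{t: M princ}, upgraded to two parameters. Fix a real elliptic reference matrix $B\in\cA(\Omega)$ (for instance $B=I$). The ``pure real'' composition $T^{B}_{w}T^{B}_{t}$ is bounded by iterated positivity: since $T^B$ is positive,
\[
\sup_{w,t>0}\mod{T^{B}_{w}T^{B}_{t}f}\leq \sup_{w}T^{B}_{w}\bigl(\sup_{t}T^{B}_{t}|f|\bigr)=\oM^{B}(\oM^{B}|f|),
\]
which is bounded on $L^{p}$ by two applications of Corollary~\ref{c: M maximal real}. It therefore suffices to control $\sup_{w,t}|T^{A_1}_{w}T^{A_2}_{t}f-T^{B}_{w}T^{B}_{t}f|$, and we split the integrand as
\[
T^{A_1}_{w}T^{A_2}_{t}-T^{B}_{w}T^{B}_{t}=T^{B}_{w}(T^{A_2}_{t}-T^{B}_{t})+(T^{A_1}_{w}-T^{B}_{w})T^{B}_{t}+(T^{A_1}_{w}-T^{B}_{w})(T^{A_2}_{t}-T^{B}_{t}).
\]

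The first term is controlled by positivity of $T^{B}_{w}$: $\sup_{w,t}\mod{T^{B}_{w}(T^{A_2}_{t}-T^{B}_{t})f}\leq \oM^{B}(\oM^{A_2,B}f)$, which is bounded on $L^{p}$ by Corollary~\ref{c: M maximal real} together with the $L^{p}$-boundedness of $\oM^{A_2,B}$ established inside the proof of Theorem~\ref{t: M princ}. The third (product-of-differences) term is handled by a double Duhamel expansion combined with the power-transfer trick (Proposition~\ref{p: M frac transfer}) and Cowling subordination applied independently to each of the two differences. After subordinating all four $\psi_\alpha,\psi_\beta$ factors into bounded imaginary powers, the resulting pointwise majorant is a fourfold integral of $|\Gamma|$-weights against $|L^{iu}_{B}UL^{iv}_{A_1}L^{iu'}_{B}U''L^{iv'}_{A_2}f|$ for suitable bounded operators $U,U''$; the measures $\mu^{w}_{\alpha}$ and $\mu^{t}_{\alpha'}$ have uniformly bounded total mass, so no residual dependence on $(w,s,t,s')$ remains in absolute value, and the $L^{p}$-norm is finite by Proposition~\ref{p: M Mixed} and Stirling.

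The main obstacle is the middle term $(T^{A_1}_{w}-T^{B}_{w})T^{B}_{t}f$: the outer complex difference blocks the use of positivity, while $T^{B}_{t}$ on the right cannot be subordinated in isolation (the multiplier $e^{-\lambda}$ is not in $L^{1}(\wrt\lambda/\lambda)$). The trick is to apply Duhamel in the \emph{reverse} representation
\[
T^{A_1}_{w}-T^{B}_{w}=\int_{0}^{w}\bigl(T^{A_1}_{s}L_{B}T^{B}_{w-s}-L_{A_1}T^{A_1}_{s}T^{B}_{w-s}\bigr)\wrt s,
\]
which places $T^{B}_{w-s}$ on the right. The semigroup property of the real reference semigroup then collapses the two adjacent factors, $T^{B}_{w-s}T^{B}_{t}=T^{B}_{w-s+t}$, giving
\[
(T^{A_1}_{w}-T^{B}_{w})T^{B}_{t}f=\int_{0}^{w}T^{A_1}_{s}L_{B}T^{B}_{w-s+t}f\,\wrt s-\int_{0}^{w}L_{A_1}T^{A_1}_{s}T^{B}_{w-s+t}f\,\wrt s.
\]

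For each of these two integrals, power-transferring $L_{B}$ and $L_{A_1}$ through the neighbouring semigroups (Proposition~\ref{p: M frac transfer}) and subordinating the resulting $\psi_{\beta}(sL_{A_1})$ and $\psi_{\alpha}((w-s+t)L_{B})$ à la Cowling yields a pointwise majorant of the form
\[
C\cdot\Bigl(\int_{0}^{w}s^{-\beta}(w-s+t)^{-\alpha}\wrt s\Bigr)\cdot\int\!\!\int|\Gamma(\beta-iu)\Gamma(\alpha-iv)|\,|L^{iu}_{A_1}WL^{iv}_{B}f|\,\wrt u\,\wrt v,
\]
where $W$ is a bounded operator on $L^{p}$ furnished by Proposition~\ref{p: M frac transfer}. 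The crucial point is that, because $\alpha>0$, the $s$-integral is monotonically decreasing in $t$ and is therefore dominated by its value at $t=0$, namely $B(1-\beta,1-\alpha)=\pi/\sin(\pi\alpha)$, uniformly in $(w,t)$. Consequently the $\sup_{w,t}$ passes trivially onto the majorant, and the $L^{p}$-norm is finite by Proposition~\ref{p: M Mixed} (exponential bound on imaginary powers) and Stirling, yielding the desired estimate $\|\gotM^{A_1,A_2}f\|_{p}\leq C\|f\|_{p}$.
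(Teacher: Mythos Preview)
Your proof is correct, but it takes a more laborious route than the paper's. You introduce a real reference matrix $B$ and split $T^{A_1}_{w}T^{A_2}_{t}-T^{B}_{w}T^{B}_{t}$ into three cross terms, each requiring its own machinery: positivity for the first, a reverse Duhamel combined with the semigroup collapse $T^{B}_{w-s}T^{B}_{t}=T^{B}_{w-s+t}$ for the middle term, and a double Duhamel with fourfold Cowling subordination for the product of differences. All of this works, and your key observation that $\int_{0}^{w}s^{-\beta}(w-s+t)^{-\alpha}\wrt s$ is monotone decreasing in $t$ (hence bounded by its value at $t=0$) is exactly the right way to kill the extra parameter.

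The paper, by contrast, avoids the real reference entirely. It writes
\[
T^{A_1}_{w}T^{A_2}_{t}f=T^{A_1}_{t+w}f+T^{A_1}_{w}\bigl(T^{A_2}_{t}f-T^{A_1}_{t}f\bigr),
\]
so the first piece is dominated by $\oM^{A_1}f$ (already bounded by Theorem~\ref{t: M princ}), and Duhamel between $A_2$ and $A_1$ gives
\[
T^{A_1}_{w}\bigl(T^{A_2}_{t}f-T^{A_1}_{t}f\bigr)=\int_{0}^{t}T^{A_1}_{w+s}(L_{A_1}-L_{A_2})T^{A_2}_{t-s}f\wrt s.
\]
Power transfer then produces the measure $(w+s)^{\alpha-1}(t-s)^{-\alpha}\ca_{[0,t]}(s)\wrt s$, whose total mass is bounded by $\mu^{t}_{\alpha}([0,t])=\pi/\sin(\pi\alpha)$ uniformly in $w$ --- the same monotonicity trick you use, but now a single Duhamel suffices and only two $\psi$-factors need subordinating. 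The paper's decomposition thus reduces the two-parameter problem to essentially a replay of the one-parameter argument, whereas yours genuinely handles a product-of-differences term. Both buy the same conclusion; the paper's is shorter, yours is perhaps more systematic if one wanted to iterate to $k$ parameters.
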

\begin{proof}
In the special case when $A_{1}$ is real-valued, the associated semigroup is nonnegative and, therefore, we have  
\begin{equation}\label{eq: AppB domination}
\mod{T^{A_{1}}_{w}T^{A_{2}}_{t}f}\leq T^{A_{1}}_{w}\oM^{A_{2}}f\leq \oM^{A_{1}}\oM^{A_{2}}f.
\end{equation}
It follows from Theorem~\ref{t: M princ} that $\gotM^{A_{1},A_{2}}$ is bounded on $L^{p}(\Omega)$.

When $A_{1}$ is not real-valued, the domination in \eqref{eq: AppB domination} does not necessarily hold and we need another argument for proving the boundedness of $\gotM^{A_{1},A_{2}}$. It is a variant of the proof of Theorem~\ref{t: M princ}. 

Fix $f\in L^{p}(\Omega)$ and write
$$
T^{A_{1}}_{w}T^{A_{2}}_{t}f=T^{A_{1}}_{t+w}f+T^{A_{1}}_{w}(T^{A_{2}}_{t}f-T^{A_{1}}_{t}f).
$$
We clearly have 
$$
\gotM^{A_{1},A_{2}}f\leq \oM^{A_{1}}f+\sup_{w,t>0}\mod{T^{A_{1}}_{w}(T^{A_{2}}_{t}f-T^{A_{1}}_{t}f)}
$$
By Theorem~\ref{t: M princ} the maximal operator $\oM^{A_{1}}$ is bounded in $L^{p}(\Omega)$. As for the second term in the right hand side, we use Duhamel's \eqref{eq: M Duhamel1} formula and write
$$
T^{A_{1}}_{w}(T^{A_{2}}_{t}f-T^{A_{1}}_{t}f)=\int^{t}_{0}T^{A_{1}}_{w+s}(L_{A_{1}}-L_{A_{2}})T^{A_{2}}_{t-s}f\wrt s=I_{w,t}(f)-J_{w,t}(f),
$$
where
$$
I_{t,w}(f)=\int^{t}_{0}L_{A_{1}}T^{A_{1}}_{w+s}T^{A_{2}}_{t-s}f\wrt s;\quad J_{t,w}(f)=\int^{t}_{0}T^{A_{1}}_{w+s}L_{A_{2}}T^{A_{2}}_{t-s}f\wrt s.
$$
We now proceed as we did in Section~\ref{s: M mm1}. We write below the details only for the first integral.

Let $\alpha=\alpha(p)\in (0,1/2)$ be as in Proposition~\ref{p: M frac transfer}. Then,
$$
I_{t,w}(f)=\int^{t}_{0}\psi_{1-\alpha}((w+s)L_{A_{1}})U^{A_{1},A_{2}}_{\alpha}\psi_{\alpha}((t-s)L_{A_{2}})f\wrt\mu^{w,t}_{\alpha}(s),
$$
where
$$
\wrt \mu^{w,t}_{\alpha}(s)=(w+s)^{\alpha-1}(t-s)^{-\alpha}\ca_{[0,t]}(s)\wrt s.
$$
By \eqref{eq: M psi bis} we have
$$
\mu^{w,t}_{\alpha}([0,t])\leq \mu^{t}_{\alpha}([0,t])=\frac{\pi}{\sin(\alpha\pi)},\quad \forall w,t>0.
$$
Therefore,
$$
\sup_{w,t>0}\mod{I_{t,w}(f)}\leq\frac{\pi}{\sin(\alpha\pi)}\sup_{s,t>0}\mod{\psi_{1-\alpha}(sL_{A_{1}})U^{A_{1},A_{2}}_{\alpha}\psi_{\alpha}(tL_{A_{2}})f}.
$$
By Cowling's subordination, the maximal operator on the right-hand side is dominated by the sublinear operator
$$
\frac{1}{\pi^{2}}\int_{\R} \int_{\R}\mod{\Gamma(1-\alpha-iu)}\mod{\Gamma(\alpha-iv)}\mod{L^{iu}_{A_{1}}U^{A_{1},A_{2}}_{\alpha}L^{iv}_{A_{2}}f}\wrt u\wrt v.
$$
It follows from Stirling's formula \eqref{eq: M Stirling} in combination with Propositions~\ref{p: M frac transfer} and \ref{p: M Mixed} that the sub-linear operator above is bounded in $L^{p}(\Omega)$.
\end{proof}
\begin{appendices}
\section{Proof of Kato's theorem (Lemma~\ref{l: M Kato61}) via square functions}
In this appendix we give a proof of Lemma~\ref{l: M Kato61} which somewhat differs from the original one by Kato \cite{Kato61}. 
We find it likely that this proof may already be known, yet we where not able to find references that would confirm this.

Let $H$ be a complex Hilbert space. Let $\gota_{1},\gota_{2}$ be two densely defined, closed and sectorial (thus continuous) sesquilinear forms on $H$ \cite{Kat, Ouhabook, Haase}. Denote by $\oL_{\gotma_{1}}$,  and $\oL_{\gotma_{2}}$, respectively, the two associated Kato-sectorial operators on $H$. Suppose further that 
$$
\Dom(\gota_{1})=\Dom(\gota_{2})=\cV\quad \textrm{and}\quad \Re\gota_{1}(h)\sim \Re\gota_{2}(h),
$$
uniformly in $h\in\cV$. This is equivalent to $\norm{h}{\gotma_{1}}\sim\norm{h}{\gotma_{2}}$ uniformly in $h\in\cV$, where
$$
\|h\|^{2}_{\gotma_{j}}=\|h\|^{2}_{H}+\Re \gota_{j}(h),\quad h\in H,\  j=1,2.
$$
Moreover, we assume that $\oL_{\gotma_{1}}$, and thus also $\oL_{\gotma_{2}}$, are injective.
\medskip

Kato's theorem on fractional powers \cite{Kato61}, which is a part of Lemma~\ref{l: M Kato61}, can be rephrased by saying that, for every $\beta\in [0,1/2)$ we have 
\begin{equation}\label{eq: M KATLION}
\cV_{\beta}:=\Dom(\oL^{\beta}_{\gotma_{1}})=\Dom(\oL^{\beta}_{\gotma_{2}})\quad \textrm{and}\quad \norm{\oL^{\beta}_{\gotma_{1}}h}{}\sim\norm{\oL^{\beta}_{\gotma_{2}}h}{},\quad \forall h\in\cV_{\beta}.
\end{equation}
Once we have this result, Lions's improvement, $\cV_{\beta}=\left[H,\cV\right]_{2\beta}$ for all $\beta\in (0,1/2)$, follows from the modern theory of complex interpolation of domains of operators (see, for example \cite[Theorem~11.6.1]{MaCaSa01}). Indeed, in \eqref{eq: M KATLION} fix $\gota_{1}=\gota$ and take $\gota_{2}=\gotb=(\gota_{1}+\gota^{*}_{1})/2$ (the real part of $\gota$). The operator $\oH$ associated with $\gotb$ is nonnegative and self adjoint. Therefore, $\Dom(\oH^{1/2})=\Dom(\gotb)=\cV$ and $\cV_{\alpha/2}=\Dom(\oH^{\alpha/2})=\left[H,\cV\right]_{\alpha}$ for all $\alpha\in [0,1]$, because $\oH$ has bounded imaginary powers in $H$.
\medskip

We now prove \eqref{eq: M KATLION} by means of McIntosh's square functions \cite{Mc}. Recall the following well-known result.
\begin{lemma}\label{M: lMc} Let $\gota$ be a closed and densely defined sectorial form on a complex Hilbert space $H$. Let $\oL_{\gotma}$ denote the associated operator. Then for every $\gamma>0$ the square function
$$
G_{\gotma,\gamma}(x):=\left(\int^{\infty}_{0}\big\|\psi_{\gamma}(t\oL_{\gotma})x\big\|^{2}\frac{\wrt t}{t}\right)^{1/2},\quad x\in H
$$
is bounded on $H$.
\end{lemma}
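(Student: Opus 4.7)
My plan is to deduce the lemma from McIntosh's classical theory of the bounded $H^{\infty}$ calculus on Hilbert spaces.

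First I would observe that, since $\gota$ is densely defined, closed and sectorial, the numerical range of the associated Kato-sectorial operator $\oL_{\gota}$ is contained in a proper sector $\bS_{\omega}$ with $\omega\in[0,\pi/2)$ equal to the sectoriality angle of $\gota$. In particular $\oL_{\gota}$ is a densely defined sectorial operator of angle at most $\omega$ (and one may assume, as in the ambient paper, injectivity with dense range, since the square function is trivially zero on $\Null(\oL_{\gota})$).

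Next, I would invoke the classical theorem that on a Hilbert space every sectorial operator whose numerical range sits in $\bS_{\omega}$ with $\omega<\pi/2$ admits a bounded $H^{\infty}(\bS_{\theta})$ functional calculus for every $\theta\in(\omega,\pi/2)$; see \cite{CDMY, Haase} or McIntosh's original paper. This is the only nontrivial ingredient: it may be obtained for instance via Kato's second representation theorem, comparing $\oL_{\gota}$ with the selfadjoint operator associated with $\Re\gota$ and absorbing the (form-bounded) imaginary perturbation, or alternatively via the numerical-range calculus of Crouzeix. With the bounded $H^{\infty}$ calculus in hand, I apply McIntosh's square-function characterization: on a Hilbert space, bounded $H^{\infty}(\bS_{\theta})$ calculus of a sectorial operator $\oL$ is equivalent to the family of estimates
$$
\int_{0}^{\infty}\nor{\psi(t\oL)x}^{2}\,\frac{\wrt t}{t}\leq C_{\psi}\nor{x}^{2}, \qquad x\in H,
$$
holding for every $\psi\in H^{\infty}_{0}(\bS_{\theta})$, i.e.\ every bounded holomorphic $\psi$ on $\bS_{\theta}$ such that $|\psi(\lambda)|\leq C\min(|\lambda|^{s},|\lambda|^{-s})$ for some $s>0$.

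The last step is to verify that $\psi_{\gamma}(\lambda)=\lambda^{\gamma}e^{-\lambda}$ belongs to $H^{\infty}_{0}(\bS_{\theta})$ for some $\theta\in(\omega,\pi/2)$. This is immediate: for $\lambda\in\bS_{\theta}$ with $\theta<\pi/2$ one has $\Re\lambda\geq\cos(\theta)|\lambda|>0$, so
$$
|\psi_{\gamma}(\lambda)|\leq|\lambda|^{\gamma}e^{-\cos(\theta)|\lambda|},
$$
which vanishes like $|\lambda|^{\gamma}$ near the origin and decays super-polynomially at infinity, giving the required decay with $s=\min(\gamma,1)$. The lemma then follows from the displayed square-function bound applied to $\psi=\psi_{\gamma}$. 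The main (and essentially only) obstacle is the bounded $H^{\infty}$ calculus for Kato-sectorial operators; once that is granted, the boundedness of $G_{\gota,\gamma}$ is a routine consequence of McIntosh's theorem.
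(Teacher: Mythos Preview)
Your proposal is correct and follows essentially the same route as the paper's own proof: the paper invokes \cite{CrDe} to obtain the bounded $H^{\infty}$-calculus for $\oL_{\gota}$ and then \cite[Section~8]{Mc} (with additional references to \cite{CDMY} and \cite[Corollary~7.1.17]{Haase}) for the square-function bound. Your write-up simply unpacks these two citations in more detail, including the verification that $\psi_{\gamma}\in H^{\infty}_{0}(\bS_{\theta})$, which the paper leaves implicit.
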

\begin{proof}
By \cite{CrDe} the operator $\oL_{\gotma}$ has a bounded $H^{\infty}$-calculus on $H$, thus the result follows from \cite[Section~8]{Mc}. See also \cite{CDMY} and \cite[Corollary 7.1.17]{Haase}.
\end{proof}
Define $T^{\gotma_{j}}_{t}=\exp(-t\oL_{\gotma_{j}})$, $j=1,2$.
Fix $\beta\in (0,1/2)$, $x\in \Dom(\oL^{\beta}_{\gotma_{1}})$ and $y\in\Dom(\oL^{\beta}_{\gotma^{*}_{2}})$. We have
$$
\aligned
\mod{\sk{x}{\oL^{\beta}_{\gotma^{*}_{2}}y}}&=\mod{\int^{\infty}_{0}-\frac{\wrt}{\wrt t}\sk{T^{\gotma_{1}}_{t}x}{T^{\gotma^{*}_{2}}_{t}\oL^{\beta}_{\gotma^{*}_{2}}y}\wrt t}\\
&\leq \int^{\infty}_{0}\mod{\sk{\oL_{\gotma_{1}}T^{\gotma_{1}}_{t}x}{T^{\gotma^{*}_{2}}_{t}\oL^{\beta}_{\gotma^{*}_{2}}y}}\wrt t
+\int^{\infty}_{0}\mod{\sk{T^{\gotma_{1}}_{t}x}{\oL^{\beta+1}_{\gotma^{*}_{2}}T^{\gotma^{*}_{2}}_{t}y}}\wrt t\\
&=I+J.
\endaligned
$$

It follows from Lemma~\ref{M: lMc} that
$$
\aligned
I&=\int^{\infty}_{0}\mod{\sk{\psi_{1-\beta}(t\oL_{\gotma_{1}})\oL^{\beta}_{\gotma_{1}}x}{\psi_{\beta}(t\oL_{\gotma^{*}_{2}})y}}\frac{\wrt t}{t}\\&\leq \left(\int^{\infty}_{0}\big\|\psi_{1-\beta}(t\oL_{\gotma_{1}})\oL^{\beta}_{\gotma_{1}}x\big\|^{2}\frac{\wrt t}{t}\right)^{1/2} \left(\int^{\infty}_{0}\big\|\psi_{\beta}(t\oL_{\gotma^{*}_{2}})y\big\|^{2}\frac{\wrt t}{t}\right)^{1/2}\\
&\leqsim_{\beta}\norm{\oL^{\beta}_{\gotma_{1}}x}{}\norm{y}{}.
\endaligned
$$
As for $J$, we have
$$
\aligned
J&=\int^{\infty}_{0}\mod{\sk{\int^{\infty}_{t}\oL_{\gotma_{1}}T^{\gotma_{1}}_{s}x\wrt s}{\oL^{\beta+1}_{\gotma^{*}_{2}}T^{\gotma^{*}_{2}}_{t}y}\wrt s}\wrt t\\
&\leq\int^{\infty}_{1}\int^{\infty}_{0}t\mod{\sk{\oL_{\gotma_{1}}T^{\gotma_{1}}_{st}x}{\oL^{\beta+1}_{\gotma^{*}_{2}}T^{\gotma^{*}_{2}}_{t}y}}\wrt t\wrt s\\
&\leq\int^{\infty}_{1}s^{\beta-1}\int^{\infty}_{0}\mod{\sk{\psi_{1-\beta}(st\oL_{\gotma_{1}})\oL^{\beta}_{\gotma_{1}}x}{\oL_{\gotma^{*}_{2}}\psi_{\beta}(t\oL_{\gotma^{*}_{2}})y}}\wrt t\wrt s.
\endaligned
$$
By analyticity of the semigroup $(T^{\gotma_{1}}_{t})_{t>0}$ we have 
\begin{equation}\label{eq: zx M}
\psi_{1-\beta}(st\oL_{\gotma_{1}})\oL^{\beta}_{\gotma_{1}}x\in \Dom(\oL_{\gotma_{1}})\subseteq\Dom(\gota_{1})=\Dom(\gota_{2})=\Dom(\gota^{*}_{2}).
\end{equation}
It follows that
$$
\aligned
\sk{\psi_{1-\beta}(st\oL_{\gotma_{1}})\oL^{\beta}_{\gotma_{1}}x}{\oL_{\gotma^{*}_{2}}\psi_{\beta}(t\oL_{\gotma^{*}_{2}})y}&=\overline{\gota^{*}_{2}\left(\psi_{\beta}(t\oL_{\gotma^{*}_{2}})y,\psi_{1-\beta}(st\oL_{\gotma_{1}})\oL^{\beta}_{\gotma_{1}}x\right)}\\
&=\gota_{2}\left(\psi_{1-\beta}(st\oL_{\gotma_{1}})\oL^{\beta}_{\gotma_{1}}x,\psi_{\beta}(t\oL_{\gotma^{*}_{2}})y\right).
\endaligned
$$
Since $\gota_{2}$ is sectorial, there exists $C>0$, that does not depend on $x,y$ and $\beta$, such that
$$
\mod{\gota_{2}\left(\psi_{1-\beta}(st\oL_{\gotma_{1}})\oL^{\beta}_{\gotma_{1}}x,\psi_{\beta}(t\oL_{\gotma^{*}_{2}})y\right)}
\leq 
C
\sqrt{\Re\gota_{2}\left(\psi_{1-\beta}(st\oL_{\gotma_{1}})\oL^{\beta}_{\gotma_{1}}x\right)}
\sqrt{\Re\gota_{2}\left(\psi_{\beta}(t\oL_{\gotma^{*}_{2}})y\right)}.
$$
Recall that we are assuming that $\Re\gota_{2}(h)\sim\Re\gota_{1}(h)$ and note that $\Re\gota^{*}_{2}(h)=\Re\gota_{2}(h)$. Hence, by Cauchy-Schwartz inequality,
$$
J\leqsim 
\int^{\infty}_{1}s^{\beta-1}\left(\int^{\infty}_{0}\Re\gota_{1}
\left(\psi_{1-\beta}(st\oL_{\gotma_{1}})\oL^{\beta}_{\gotma_{1}}x\right)\wrt t\right)^{1/2}
\left(\int^{\infty}_{0}\Re\gota^{*}_{2}\left(\psi_{\beta}(t\oL_{\gotma^{*}_{2}})y\right)\wrt t\right)^{1/2}\wrt s.
$$ 
Then a change of variable in the first integral with respect to the variable $t$ gives
$$
J\leqsim 
\frac{1}{1/2-\beta}
\left(\int^{\infty}_{0}\Re\gota_{1}\left(\psi_{1-\beta}(t\oL_{\gotma_{1}})\oL^{\beta}_{\gotma_{1}}x\right)\wrt t\right)^{1/2}\left(\int^{\infty}_{0}\Re\gota^{*}_{2}\left(\psi_{\beta}(t\oL_{\gotma^{*}_{2}})y\right)\wrt t\right)^{1/2}.
$$
We now observe that, by \eqref{eq: zx M},
$$
\aligned
&\gota_{1}\left(\psi_{1-\beta}(t\oL_{\gotma_{1}})\oL^{\beta}_{\gotma_{1}}x\right)\wrt t=\sk{t\oL_{\gotma_{1}}\psi_{1-\beta}(t\oL_{\gotma_{1}})\oL^{\beta}_{\gotma_{1}}x}{\psi_{1-\beta}(t\oL_{\gotma_{1}})\oL^{\beta}_{\gotma_{1}}x}\frac{\wrt t}{t}\\
&\gota^{*}_{2}\left(\psi_{\beta}(t\oL_{\gotma^{*}_{2}})y\right)\wrt t=\sk{t\oL_{\gotma^{*}_{2}}\psi_{\beta}(t\oL_{\gotma^{*}_{2}})y}{\psi_{\beta}(t\oL_{\gotma^{*}_{2}})y}\frac{\wrt t}{t}.
\endaligned
$$
We now use Cauchy-Schwartz inequality and Lemma~\ref{M: lMc}. It follows that
$$
J\leqsim\frac{1}{1/2-\beta}\norm{\oL^{\beta}_{\gotma_{1}}x}{}\norm{y}{}
$$
and combining the estimate above with the similar estimate for $I$, we conclude that
$$
\mod{\sk{x}{\oL^{\beta}_{\gotma^{*}_{2}}y}}\leqsim_{\beta}\norm{\oL^{\beta}_{\gotma_{1}}x}{}\norm{y}{},\quad \forall x\in\Dom(\oL^{\beta}_{\gotma_{1}}),\quad y\in\Dom(\oL^{\beta}_{\gotma^{*}_{2}}),\quad \forall\beta\in (0,1/2)
$$
which is equivalent to
$$
\Dom(\oL^{\beta}_{\gotma_{1}})\subseteq\Dom(\oL^{\beta}_{\gotma_{2}})\quad \textrm{and}\quad \norm{\oL^{\beta}_{\gotma_{2}}h}{}\leqsim_{\beta}\norm{\oL^{\beta}_{\gotma_{1}}h}{}\quad \forall h\in \Dom(\oL^{\beta}_{\gotma_{1}}).
$$
The reverse inclusion and inequality (and thus \eqref{eq: M KATLION}) follow by reversing the role of $\gota_{1}$ and $\gota_{2}$ in the previous proof. 

\end{appendices}

\subsection*{Acknowledgements}

A. Carbonaro was partially supported by the ``National Group for Mathematical Analysis, Probability and their Applications'' (GNAMPA-INdAM).
O. Dragi\v{c}evi\'c was partially supported by the Slovenian Research Agency, ARRS (research grant J1-1690 and research program P1-0291).

\bibliographystyle{amsplain}
\bibliography{biblio_Maximal}

\end{document}